\newcommand{\Rtrue}{R^{\textrm{\rm true}}}
\newcommand{\Rempirical}{R^{\textrm{\rm emp}}}
\newcommand{\Cbudget}{C_{\textrm{\rm budget}}}
\newcommand{\abudget}{a_{\textrm{\rm budget}}}
\newcommand{\azero}{a_0}
\newcommand{\gprime}{ \frac{e^{B_{b}X_{b}}}{(1+e^{B_{b}X_{b}})^2}}
\newcommand{\dist}{\textrm{dist}}
\newcommand{\alphaa}{ \frac{1}{2} + \frac{\|\abudget\|_{2}^{-1}+\frac{\epsilon}{32X_{b}}}{B_{b} + \frac{\epsilon}{32X_{b}}}  \frac{\Gamma\left[1+\frac{d}{2}\right]}{\sqrt{\pi}\Gamma\left[\frac{d+1}{2}\right]} {}_{2}F_{1}\left(\tfrac{1}{2},\tfrac{1-d}{2};\tfrac{3}{2};\left(\tfrac{\|\abudget\|_{2}^{-1}+\frac{\epsilon}{32X_{b}}}{B_{b} + \frac{\epsilon}{32X_{b}}}\right)^{2}\right)}
\newcommand{\alphab}{1 - \frac{1}{2} I_{1-\left(\|\abudget\|_{2}^{-1}+\frac{\epsilon}{32X_{b}}\right)^{2}/\left(B_{b}+\frac{\epsilon}{32X_{b}}\right)^{2}} \left(\frac{d+1}{2}, \frac{1}{2} \right)}
\newcommand{\one}{\textbf{1}}
\newcommand{\R}{\mathbb{R}}
\newcommand{\X}{\mathcal{X}}
\newcommand{\F}{\mathcal{F}}
\newcommand{\Y}{\mathcal{Y}}
\newcommand{\Obj}{\textrm{\rm OpCost}}
\newcommand{\argminpi}{\hbox{ \raise-1.6mm\hbox{$\textstyle
    \mathrm{argmin} \atop \pi\in\Pi$}}}
\newcommand{\argminf}{\hbox{ \raise-1.6mm\hbox{$\textstyle
    \mathrm{argmin} \atop f\in\F^{unc}$}}}
\newcommand{\minpi}{\hbox{ \raise-1.6mm\hbox{$\textstyle
    \mathrm{min} \atop \pi\in\Pi$}}}    
\newcommand{\argmin}{\textrm{argmin}}
\newcommand{\cuppi}{\hbox{ \raise-1.6mm\hbox{$\textstyle
    \cap \atop g\in \F_{good}$}}}
\newcommand{\cuuppi}{\hbox{ \raise-1.6mm\hbox{$\textstyle
    \cup \atop g\in \F_{good}$}}} 
\newcommand{\gprimeCtwo}{\frac{e^{-B_{b}X_{b}}}{1+e^{-B_{b}X_{b}}}}
\journalname{Mach Learn}
\begin{document}

\title{On Combining Machine Learning with Decision Making\thanks{Funding for Theja Tulabandhula was provided by a Fulbright Fellowship and Xerox Fellowship. Cynthia Rudin's work on this project was funded in part by Con Edison, by the MIT Energy Initiative Seed Fund, and NSF grant IIS-1053407.}
}

\author{Theja Tulabandhula         \and
         Cynthia Rudin
}

\institute{Theja Tulabandhula \at
               Department of Electrical Engineering and Computer Science,\\ 
               Massachusetts Institute of Technology, Cambridge, MA 02139, USA.\\
              \email{theja@mit.edu}
           \and
           Cynthia Rudin \at
              MIT Sloan School of Management,\\
       Massachusetts Institute of Technology, Cambridge, MA 02139, USA.\\
       \email{rudin@mit.edu}
}

\date{Received: date / Accepted: date}
\maketitle
\begin{abstract}
We present a new application and covering number bound for the framework of ``Machine Learning with Operational Costs (MLOC)," which is an exploratory form of decision theory. The MLOC framework incorporates knowledge about how a predictive model will be used for a subsequent task, thus combining machine learning with the decision that is made afterwards. In this work, we use the MLOC framework to study a problem that has implications for power grid reliability and maintenance, called the \textit{Machine Learning and Traveling Repairman Problem} (ML\&TRP). The goal of the ML\&TRP is to determine a route for a ``repair crew," which repairs nodes on a graph. The repair crew aims to minimize the cost of failures at the nodes, but as in many real situations, the failure probabilities are not known and must be estimated. The MLOC framework allows us to understand how this uncertainty influences the repair route. We also present new covering number generalization bounds for the MLOC framework.
\keywords{decision theory\and generalization bound\and constrained linear function classes\and covering numbers\and traveling repairman\and mixed-integer programming}
\end{abstract}

\section{Introduction}
In many domains, it is essential to understand how uncertainty in predictions influences decision-making. In that sense, one would like to explore the space of possible reasonable predictions and understand the range of reasonable policies and their costs. The new framework of Machine Learning with Operational Costs (MLOC) \citep[][]{TulabandhulaRu11ArXiv} provides a mechanism to do this, and is a type of exploratory decision theory. Where usual decision theories  provide a single policy that minimizes expected costs, the MLOC framework is able to produce a range of reasonable policies that span the full set of reasonable costs. To do this, the operational cost becomes a regularization term within the machine learning model, and adjusting the regularization constant allows us to explore solutions for all reasonable costs. This gives decision makers a way to understand the uncertainty in their predictive model in terms of something they can grasp - uncertainty in the cost to solve the problem.

The MLOC framework can also be used in another way, namely to incorporate prior knowledge about the cost to produce a better predictive model. In that sense, knowledge about the cost translates into a more restricted hypothesis space, which potentially translates into better generalization. In particular, if the hypothesis space is restricted, then upper bounds on the complexity of the hypothesis space are smaller, leading to better generalization bounds.

In this work, we provide an application of the MLOC framework to power grid engineering and reliability. This problem, called the \textit{Machine Learning and Traveling Repairman Problem} (ML\&TRP), has a machine learning component and a decision-making component. The machine learning component is to predict future power grid failures before they occur, where these failures occur at equipment that is distributed throughout the city. The decision-making component is to determine in what order the equipment should be inspected. We could use the MLOC framework in either of the two ways outlined above: either to understand the range of reasonable costs for the power company, or to use prior knowledge that the costs are high or low in order to choose a more predictive and cost-effective route. 

To be more precise, the ML\&TRP \textit{prediction} problem is to determine the failure probability for each node on a graph, using features of each node and past failure data. The \textit{decision} problem is to determine a route for a ``repair crew" on the graph, where there is some travel time between each pair of nodes. There are many possible applications of the ML\&TRP, including the scheduling of safety inspections or repair work for the electrical grid, oil rigs, underground mining, machines in a factory, or airlines. In our experiments, we use data from an ongoing project with Con Edison, which is NYC's power utility company.

We also provide a generalization bound for the MLOC framework based on covering numbers. These bounds are different than those of \cite{TulabandhulaRu11ArXiv} which use concentration of Rademacher complexity and Dudley's entropy integral, and are not directly comparable. The bounds here have a much more geometric flavor looking at the hypothesis space as a volumetric object. Neither of the two bounds are tighter in all situations. 
We find the bounds here to be more intuitive, as the geometry is more transparent.

The ML\&TRP relates to literature on both machine learning and optimization (time-dependent traveling salesman problems). In machine learning, 
our work bears a slight resemblance to work on graph-based regularization \citep[]{agarwal06-graph-ranking,MR_JMLR_06,ZhouETAL04}, but their goal is to obtain probability estimates that are smoothed on a graph with suitably designed edge weights. On the other hand, our goal is to obtain, in addition to probability estimates, a low-cost route for traversing a very different graph with edge weights that are physical distances. Our regularization is vastly different from popular ones ($\ell_{1}$ or $\ell_{2}$ norm) because our regularization comes from beliefs on decision-making costs. We use unlabeled data as does semi-supervised learning \citep[][]{ChaSchZie06} but differ in the motivation as well as the way we use these additional data. For example, we do not extract distributional information from the unlabeled data. Our work contributes to the literature on the TRP (Traveling Repairman Problem) and related problems by adding the new dimension of probabilistic estimation at the nodes. We create new adaptations of modern techniques \citep{fischetti93,eijl95,lechmann09} within our work for solving the TRP part of the ML\&TRP.

There is a body of literature regarding cost models for maintenance in the reliability modeling literature, though the emphasis in those works is usually to design a model that accurately represents the stochastic process for the failures. In that literature, for instance, a maintenance schedule would be created from the predicted condition of the equipment (but not on the cost of performing the repairs in a certain order or routing a vehicle between the equipment).
\citet{barbera96} develop a model that assumes that equipment have exponential rates of failure and fail only once in an inspection interval, and they use this model to determine a maintenance schedule. \citet{marseguerra02} introduces a model for degradation leading to failure for a continuous complex system, and use Monte Carlo simulations to determine the optimal degradation level to perform an inspection. Their work uses a very different cost model from ours; the cost is the long run average maintenance cost and cost of failures. A neural-network based maintenance model was developed by \citet{heng09}. 
A related work on routing for emergency maintenance on the electrical grid is the heuristic algorithm of \citet{weintraub99} that dispatches vehicles to areas where there are currently breakdowns and where there are likely to be breakdowns in the future. \citet{ErtekinRuMcAAAI2013} propose a model for failures of power grid equipment and use this model to simulate the cost of various inspection policies. 

One can view the MLOC framework to be somewhat Bayesian, in the sense that prior knowledge is being used when not enough data are available.

In Section \ref{sec:MLOC} we review the MLOC framework. In Section \ref{sec:formulations} we will motivate and outline the new application of the MLOC framework to the ML\&TRP, providing two ways of modeling failure cost. In Section \ref{sec:minlps} we provide mixed-integer nonlinear (MINLP) formulations and discuss algorithms an illustrative example. Section \ref{sec:grid} gives experimental results on data from the NYC power grid, showing the benefit of the ML\&TRP over traditional methods. Section \ref{sec:generalizationbound} contains the theoretical generalization result for the MLOC framework with proofs. Section \ref{sec:conclusion} concludes the paper. 
The conference paper of \citep{TulabandhulaRuJaADT11} contains a summary of work on the ML\&TRP, and the paper \cite{TulabandhulaRu11ArXiv} provides a more complete explanation of the MLOC framework, with other illustrations and connections to robust optimization.

\section{Review of Framework for Machine Learning with Operational Costs}\label{sec:MLOC}

In the MLOC framework we have the standard supervised training set of labeled instances, $\{(x_i, y_i)\}_{i=1}^m$, where $x_i\in\X$, $y_i\in\Y$. For simplicity, $\X\subset \mathbb{R}^{d}$. To have nonlinear functions, we could simply have the $j^{\textrm{th}}$ component of $x$ replaced by a nonlinear function $h_j(x)$. Also $\Y\subset \mathbb{R}$. We wish to learn a function $f^*:\X\rightarrow \Y$. This is ordinarily done by solving a minimization problem:
\begin{equation}
f^*\in\argmin_{f\in\F^{unc}}\left( \sum_{i=1}^{m} l(f(x_i),y_i)+C_{2}R(f)\right),
\label{eqn:reg-train-loss}
\end{equation}
for some loss function $l:\Y\times\Y \rightarrow \mathbb{R}_{+}$, regularizer $R:\F^{unc} \rightarrow \mathbb{R}$, constant $C_{2}$ and function class $\F^{unc}$. 
$\F^{unc}$ is the set of all linear functionals, where $f \in \mathcal{F}^{unc}$ is of the form $\lambda \cdot x$, $\lambda \in \mathbb{R}^{d}$. The superscript `$unc$' refers to the word ``unconstrained.'' 

Consider an organization making a policy decision regarding a new collection of unlabeled instances $\{\tilde{x}_i\}_{i=1}^M \in \mathcal{X}^{M}$. The cost to enact a policy is not exactly known, because the labels for the $\{\tilde{x}_i\}_{i}$ are not known. Instead the model's predictions are used, which are the $f^*(\tilde{x}_i)$'s. The goal of the organization is then to create a policy $\pi^*$ that minimizes operational cost $\Obj(\pi,f^*,\{\tilde{x}_i\}_i)$. The operational cost $\Obj(\pi,f^*,\{\tilde{x}_{i}\}_i)$ is how much will be spent if policy $\pi$ is chosen in response to the $\{f^*(\tilde{x}_{i})\}_{i}$'s.
When there is uncertainty in $f^*$, there is uncertainty in the cost to enact the optimal policy $\pi^*$. This uncertainty is what we would like to explore.
A typical way that companies make decisions is using what we call the \textbf{sequential process}, which computes the policy according to two steps: 
\textcolor{black}{
\begin{description}
\item [Step 1:] Create function $f^{*}$ based on $\{(x_i, y_i)\}_i$ according to (\ref{eqn:reg-train-loss}). That is: 
\[f^*\in\argmin_{f\in\F^{unc}} \left(\sum_{i=1}^{m} l(f(x_i),y_i)+C_{2}R(f)\right).\]
\item [Step 2:] Choose policy $\pi^*$ to minimize the operational cost, 
\[\pi^*\in\argmin_{\pi\in\Pi} \Obj(\pi,f^*,\{\tilde{x}_{i}\}_i).\] 
\end{description}
}
On the other hand, the MLOC framework is based around a \textbf{simultaneous process}, which combines Steps 1 and 2 of the sequential process. To do this, the operational cost becomes a regularization term, and its regularization parameter $C_1$ controls the amount of optimism or pessimism for the operational cost.
\begin{description}
\item [Step 1:]  Choose a model $f^*$ obeying the following:
\begin{align}\nonumber
f^* \in \argminf \left[ \sum_{i=1}^{m} l\left(f(x_i),y_i\right) 
+C_2 R(f) +C_1 \minpi \Obj\left(\pi,f,\{\tilde{x}_{i}\}_i\right) \right].\hspace*{0pt}\label{eqn:optimisticbias}
\end{align}
\item [Step 2:] Compute the policy: $$\pi^* \in \argminpi \Obj\left(\pi,f^*,\{\tilde{x}_{i}\}_i\right).$$
\end{description}
\textcolor{black}{
The case $C_1=0$ for the simultaneous process is precisely the sequential process; thus, the sequential process is a special case of the simultaneous process. 
}
Our ability to solve the MLOC simultaneous process depends on the tractability of the optimization problem $\argminpi \Obj\left(\pi,f^*,\{\tilde{x}_{i}\}_i\right)$. However, if this problem is intractable, then the sequential process is also intractable, and the organization will not be able to choose an optimized policy at all. The simultaneous process requires this subproblem to be solved several times, whereas the sequential process only requires the subproblem to be solved once. If the number of unlabeled instances is small, then Step 1 can be solved without a problem, even if the training set is large.
As $C_1$ varies over its full range, it maps out the full range of costs for all reasonable solutions. If $C_1$ is set to a number that is too large (either positive or negative), the solution of the simultaneous process will have empirical error that is too high to be reasonable. In that case, we know that by varying $C_1$ within a smaller range will lead to the full range of costs for reasonable predictive models.

As with any regularization term, the new operational cost term can be interpreted as a prior belief about the model - in this case, a belief that the operating costs should be lower or higher on the current set of unlabeled instances $\{\tilde{x}_{i}\}_{i}$. In that sense, MLOC regularization may have a closer connection to reality than typical (e.g., $\ell_{1}$ or $\ell_{2}$ norm) regularizers. If one asks a manager at a company what prior belief they have about the estimation model, it is not likely they would give a answer in terms of coefficients for a linear model. Even managers who are not mathematicians or computer scientists might have some belief - they could perhaps believe that they are expecting to spend a certain amount to enact the policy. It is possible that this type of belief, which relies on direct experience, might be more practical, and more accurate, than the more abstract prior information that we are typically used to dealing with. 
In the ML\&TRP, the training error term is derived from data from the past, and the OpCost term is calculated on data from the present. The OpCost term is the only term that deals with routing. 

\section{The Machine Learning and Traveling Repairman Problem}\label{sec:formulations}

The US Department of Energy's Grid 2030 document states that ``America's electric system, `the supreme engineering achievement of the 20th century,' is aging, inefficient, and congested, and incapable of meeting the future energy needs of the Information Economy without operational changes and substantial capital investment over the next several decades"  \citep{GRID2030}.
Since 2004, many power utility companies are implementing new inspection and repair programs for preemptive maintenance, whereas in the past, all repair work was done reactively \citep{Urbina04}. New York City has the oldest power system in the world, and the largest underground electric system, with enough electrical cable to go three and a half times around the world. In New York City, there are several separate new preemptive maintenance programs, including the targeted inspection program for electrical service structures (manholes), programs that perform extensive repairs that were placed on a waiting list after the manhole was inspected, and the \textit{vented cover replacement program}, where each manhole is replaced with a vented cover that allows gases to escape, mitigating the possibility and effects of serious events including fires and explosions. Con Edison, the power company in NYC, has the ability to use machine learning models in Manhattan, Brooklyn and the Bronx for scheduling of manhole inspection and repair work 
\citep{RudinETAL10,RudinETAL11,RudinEtAl2011ComputerMagazine,RudinEtAl14}.
This project was the motivation for the development of the ML\&TRP and we use data from the NYC power grid for our experiments. Features for the NYC model are derived from physical characteristics of the manhole (e.g., number of electrical cables entering the manhole), and features derived from its history of involvement in past events. Repeat failures (serious and non-serious events) can occur on the same manhole. We take the possibility of repeat failures into account in the ML\&TRP (in Cost 1 given below).
That said, failures are rare events, and it is not easy to accurately estimate the probability that a given manhole will fail within a given period of time. Because of this uncertainty, we can use the MLOC framework to assist in decision-making. 
The result $\pi^{*}\in\Pi$ from the algorithm would be a route that could be used for the repair crew to fix a pre-specified set of manholes corresponding to $\{\tilde{x}_i\}_{i=1}^M$, which are assumed to need a particular repair.

\subsection{Learning}
In what follows, we will use descriptions and terminology that match the power grid application.
In the ML\&TRP, data from the past will be used to train the model, denoted $\{(x_i,y_i)\}_{i=1}^m$, whereas the $\tilde{x}_i$ are calculated from the present, whose labels are from the future and thus not known. Let $x_{i}^{j}$ indicate the $j$-th coordinate of the feature vector for manhole $i$ calculated at a time period from the past. The $x_i$ vector encodes the number and types of electrical cables, number and types of previous events, etc. The label for manhole $i$ from the past is denoted $y_i$, where $y_i\in\{-1,1\}$ indicating whether the manhole had a failure (fire, explosion, smoking manhole) within a specific period of time in the past. 
More details about the features and labels can be found in Section \ref{sec:grid}. 
The other instances $\{\tilde{x}_{i}\}_{i=1}^{M}$ (with $M$ unrelated to $m$), are unlabeled data that are each associated with a node on a graph $G$. The nodes of the graph $G$ indexed by $i=1,...,M$ represent manholes on which we want to design a route.  Note that $M$ can be substantially smaller than $m$, e.g., $M<10$ and $m>20,000$; e.g., for a repair truck that carries supplies for at most $M$ repairs. We are also given physical distances $d_{i,j} \in \mathbb{R}_+$ between each pair of nodes $i$ and $j$. A route on $G$ is represented by a permutation $\pi$ of the node indices $1,\ldots,M$. Let $\Pi$ be the set of all permutations of $\{1,...,M\}$. Failure probabilities will be estimated at each of the nodes and these estimates will be based on a function of the form $f_{\lambda}(x) = \lambda \cdot x$. 
The class of possible functions $\mathcal{F}$ is chosen to be: $ \mathcal{F}:= \{f_{\lambda} : \lambda\in\R^d ,  \|\lambda\|_{2} \leq B_{b}\} $, where $B_{b}$ is a fixed positive real number.  We choose the {logistic loss}: $l(f_{\lambda}(x),y):=\ln\left(1+e^{-yf_{\lambda}(x)}\right)$ so that {the probability of failure} $P(y=1|x)$,  is estimated as in logistic regression by:
\begin{equation}\label{probs}
{P}(y=1|x) \textrm{ or } p(x):=\frac{1}{1+e^{-f_{\lambda}(x)}}.
\end{equation}

Note that the routing problem is done in batch: once the route is determined, the repair truck is sent out and changes to the route are no longer possible.

\subsection{Two Options for the OpCost}\label{subsec:2models}
The operational cost can be defined to match the application. In the first option (denoted as Cost 1),  for each node there is a cost for (possibly repeated) failures prior to a visit by the repair crew. In this case, temporary repairs are made to fix each node before the repair crew comes to make permanent repairs. In the second option (denoted as Cost 2), for each node, there is a cost for the first failure prior to visiting it. In this case, permanent repairs are made when there is an event, or when the repair crew arrives, whichever is sooner. There is a natural interpretation of the failures as being generated by a continuous random process at each of the nodes. When discretized in time, this is approximated by a Bernoulli process with parameter $p(\tilde{x}_{i})$. Both Cost 1 and Cost 2 are appropriate for power grid applications. Cost 2 is also appropriate for delivery truck routing applications, where perishable items can fail (once an item has spoiled, it cannot spoil again). 

For convenience, we assume that after the repair crew visits all the nodes, it returns to the starting node (node $1$) which is fixed beforehand. Scenarios where one is not interested in beginning from or returning to the starting node would be modeled slightly differently (the computational complexity remains the same). {Let a route be represented by $\pi:\{1,...,M\}\mapsto \{1,...,M\} $,} this means that $\pi(i)$ is the $i^{\textrm{th}}$ node to be visited. For example, let $M=4, \pi = [2,3,4,1]$. This means, $\pi(1) = 2,$ node 2 is the first node to be visited, $\pi(2) = 3$, node 3 is the second node on the route, and so on. Since the final node visited is the first node, we append the following to the definition of $\pi$: $\pi(M+1) = \pi(1)$. Let the distances be scaled appropriately so that a unit of distance is traversed in a unit of time. Given a route, the \textit{latency} of a node $\pi(i)$ is the time (or equivalently distance) from the start at which node $\pi(i)$ is visited. It is the sum of distances traversed before position $i$ on the route:
\begin{equation}\label{latency}
L_{\pi}(\pi(i)) :=
\left\{
\begin{array}{ll}
\sum_{k=1}^{M} d_{\pi(k) \pi(k+1)}  \one_{[k< i]} & i=2,...,M \vspace{3pt} \\
\sum_{k=1}^{M} d_{\pi(k) \pi(k+1)} & i=1.
\end{array}
\right.
\end{equation}
\textcolor{black}{The starting node $\pi(1)$ thus has a latency $L_{\pi}(\pi(1))$ which is the total length of the route starting at node $\pi(1)$ and ending at node $\pi(1)$ after visiting all other nodes. 
}
\subsubsection*{Cost 1: Cost is Proportional to Expected Number of Failures Before the Visit}

Up to the time that node $\pi(i)$ is visited by the repair crew, there is a probability $p(\tilde{x}_{\pi(i)})$ that a failure will occur within each unit time interval. \textcolor{black}{Equivalently, within each unit time interval, failures are determined by a Bernoulli random variable with parameter $p(\tilde{x}_{\pi(i)})$.} 
Thus, in a time interval of length $L_{\pi}(\pi(i))$ units, the number of node failures follows the binomial distribution $\textrm{Bin}\left(L_{\pi}(\pi(i)),p(\tilde{x}_{\pi(i)})\right)$.
\textcolor{black}{For each node, we will associate a cost proportional to the expected number of failures before the repair crew's visit, as follows:}
\begin{eqnarray}
\nonumber \textrm{Cost of node $\pi(i)$} &\propto& E(\textrm{number failures in}\; L_{\pi}(\pi(i))\; \textrm{time units}) \\&=& \textrm{mean of Bin}(L_{\pi}(\pi(i)),p(\tilde{x}_{\pi(i)})) = p(\tilde{x}_{\pi(i)})L_{\pi}(\pi(i)). \label{eqn:model1costLi}
\end{eqnarray}
Using this cost, if the failure probability for node $\pi(i)$ is small, we can afford to visit it later on, trading off its latency $L_{\pi}(\pi(i))$. 
\textcolor{black}{
If $p(\tilde{x}_{\pi(i)})$ {is large}, we should visit node $\pi(i)$ earlier to keep our overall failure cost low.
} 
The failure cost of route $\pi$ is then $\textrm{OpCost}(\pi,f_{\lambda},\{\tilde{x}_{i}\}_{i=1}^M,\{d_{i,j}\}_{i,j=1}^M)= \sum_{i=1}^M p(\tilde{x}_{\pi(i)}) L_{\pi}(\pi(i))$.

\textcolor{black}{
Substituting the definition of $L_{\pi}(\pi(i))$ from (\ref{latency}):
\begin{eqnarray}\nonumber
\lefteqn{\textrm{OpCost}(\pi,f_{\lambda},\{\tilde{x}_{i}\}_{i=1}^M,\{d_{i,j}\}_{i,j=1}^M)=}\\
&& \sum_{i=2}^M p(\tilde{x}_{\pi(i)})\sum_{k=1}^M d_{\pi(k) \pi(k+1)}  \one_{[k < i]} + p(\tilde{x}_{\pi(1)})\sum_{k=1}^M d_{\pi(k) \pi(k+1)},
\label{cost1}
\end{eqnarray}
where $p(\tilde{x}_{\pi(i)})$ is given in (\ref{probs}). This will be Cost 1.
There are ways to make Cost 1 more general. The individual node cost in (\ref{eqn:model1costLi}) assumes that the node's failure probability $p(\tilde{x}_{\pi(i)})$ becomes zero after the repair crew's visit, so that for the remainder of the route, the cost incurred at this node is $\propto 0 \times (L_{\pi}(\pi(1))-L_{\pi}(\pi(i))$. We could relax this by assuming $p(\tilde{x}_{\pi(i)})$ does not vanish after the repair crew's visit and adding an additional cost for the expected failures in this period. That is, if $\beta$ is a constant of proportionality for the cost after visiting node $\pi(i)$, then the cost would become:
\begin{eqnarray*}
\textrm{Cost of node }\pi(i) = \beta\left[L_{\pi}(\pi(1)) - L_{\pi}(\pi(i))\right]p(\tilde{x}_{\pi(i)}) + L_{\pi}(\pi(i)) p(\tilde{x}_{\pi(i)}).
\end{eqnarray*}
If $\beta=1$, then the repair crew does not have any effect and cost of each node is independent of its expected number of failures before the repair crew's visit. Typically, we expect that the repair crew will repair the node so that it will not fail, and the second term above is much larger than the first.  Taking the constant of proportionality as $\beta=0$, we return to the individual costs given by (\ref{eqn:model1costLi}).
}

Note that since the cost is a sum of $M$ terms, it is invariant to ordering or indexing (caused by $\pi$). Thus we can rewrite the cost as
\begin{equation}\label{cost1def}
\textrm{OpCost}(\pi,f_{\lambda},\{\tilde{x}_{i}\}_{i=1}^M,\{d_{i,j}\}_{i,j=1}^M)= \sum_{i=1}^M p(\tilde{x}_{i}) L_{\pi}(i).
\end{equation}

 \subsubsection*{Cost 2: Cost is Proportional to Probability that the First Failure is Before the Visit}
 \label{subsec:model2description}
This cost reflects the penalty for not visiting a node before the first failure occurs there. 
This model is governed by the geometric distribution. 
\textcolor{black}{
Let the parameter of the distribution be $p$.
Then the probability that the first failure for node $\pi(i)$ occurs at time index $t>0$  is $p(1-p)^{t-1}$. The probability that the first failure for node $\pi(i)$ occurs before time $L_{\pi}(\pi(i))$ is then the sum of the failure probabilities from $t=1,...,L_{\pi}(\pi(i))$ : $\sum_{t=1}^{L_{\pi}(\pi(i))}p(1-p)^{t-1} = 1-(1-p)^{L_{\pi}(\pi(i))}$. Thus, substituting the expression (\ref{probs}) for $p$, we have:
}
\begin{eqnarray*}
\lefteqn{{P}\Big(\textrm{first failure occurs before time } L_{\pi}(\pi(i))\Big) = 1 - (1-p(\tilde{x}_{\pi(i)}))^{L_{\pi}(\pi(i))}}\nonumber\\
&=& 1 - \left(1 - \frac{1}{1+ e^{-f_{\lambda}(\tilde{x}_{\pi(i)})}}\right)^{L_{\pi}(\pi(i))}
=1 - \left(1+ e^{f_{\lambda}(\tilde{x}_{\pi(i)})}\right)^{-L_{\pi}(\pi(i))}.
\end{eqnarray*}

\textcolor{black}{
The cost of visiting node $\pi(i)$ will be proportional to this quantity:  
\begin{eqnarray}
\textrm{Cost of node } \pi(i) 
&\propto&  \left(1 - \left(1+ e^{f_{\lambda}(\tilde{x}_{\pi(i)})}\right)^{-L_{\pi}(\pi(i))}\right). 
\label{model2costLi}
\end{eqnarray}
 Similarly to Cost 1, $L_{\pi}(\pi(i))$ influences the cost at each node. If we visit a node early in the route, then the cost incurred is small because the node is less likely to fail before we reach it. Similarly, if we schedule a visit later on in the tour, the cost is higher because the node has a higher chance of failing prior to the repair crew's visit. 
}

The total failure cost is thus:
\begin{equation}\label{Cost2}
\textcolor{black}{ \textrm{OpCost}(\pi,f_{\lambda},\{\tilde{x}_{i}\}_{i=1}^M,\{d_{i,j}\}_{i,j=1}^M)= }
\sum_{i=1}^M \left(1 - \left(1+ e^{f_{\lambda}(\tilde{x}_{\pi(i)})}\right)^{-L_{\pi}(\pi(i))}\right).
\end{equation}
This cost is not directly related to a weighted TRP cost in its present form. That is, when the failure probabilities of the nodes are all the same, the total cost is not linear in the latencies, as is the case for Cost 1. Building on this cost, we will derive a cost that is the same as a weighted  TRP in Section \ref{subsec:jointminimize_model2}, of the form: 
\begin{eqnarray} \label{alternative}
\textrm{Cost of node } \pi(i) &\propto&
L_{\pi}(\pi(i))\log \left(1+e^{f_{\lambda}(\tilde{x}_{\pi(i)})}\right),
\end{eqnarray}
as an alternative to (\ref{model2costLi}). 

\textcolor{black}{
 There is a slightly more general version of this formulation (as there was for Cost 1), which is to take the cost for each node to be a function of two quantities: the probability of failure before the visit, and the probability of failure after the visit. Let us redefine $\beta$ to be a constant of proportionality for the cost of visiting before the failure event. From the geometric distribution, ${P}$(failure occurs after time  $L_{\pi}(\pi(i))) = (1-p(\tilde{x}_{\pi(i)}))^{L_{\pi}(\pi(i))},$ and the cost of visiting node $\pi(i)$ becomes:
 \begin{eqnarray*}
 \textrm{Cost of node } \pi(i) &\propto& P(\textrm{failure before } L_{\pi}(\pi(i))) + \beta \times P\left(\textrm{failure after } L_{\pi}(\pi(i))\right).
 \end{eqnarray*}
 If $\beta = 1$, then the sum above is $1$ for all nodes regardless of node failures or latencies. 
 More realistically, the cost of visiting the node after the failure is more than the cost of visiting proactively, $\beta\ll 1$ leading to (\ref{model2costLi}). 
 }
 \textcolor{black}{
 We could again have written the summation to hide the dependence on $\pi$:  
 \[\textrm{OpCost}(\pi,f_{\lambda},\{\tilde{x}_{i}\}_{i=1}^M,\{d_{i,j}\}_{i,j=1}^M)=  \sum_{i=1}^M \left(1 - \left(1+ e^{f_{\lambda}(\tilde{x}_{i})}\right)^{-L_{\pi}(i)}\right).\] 
\begin{remark}
The costs defined above are by no means exhaustive. We chose to define operational costs this way because they mimic the well known minimum latency objective in routing problems. 
For instance,  we could have used a \textcolor{black}{Poisson} failure model at each node instead of binomial or geometric as in Costs 1 and 2. Let us assume that the \textcolor{black}{Poisson} rate parameter $\mu(\tilde{x}_{\pi(i)})$ is the output of the estimation problem (say proportional to $p(\tilde{x}_{\pi(i)})$). Then 
\begin{equation*}
P(k \textrm{ failures occur in time } L_{\pi}(\pi(i))) = \frac{(\mu(\tilde{x}_{\pi(i)}) L_{\pi}(\pi(i)))^{k}e^{-\mu(\tilde{x}_{\pi(i)}) L_{\pi}(\pi(i))}}{k!}.
\end{equation*}
From this we can get the probability that at least one failure occurs in time interval $[0,L_{\pi}(\pi(i))]$ at node $\pi(i)$. Now we can define the operational cost to be the sum of these probabilities which depend on the routing and proceed in the same way as Cost 2. That is, we can minimize this cost to get the optimal routing $\pi^{*}$.
\end{remark}
\begin{remark} 
The operational cost must depend on graph properties like latency. We would not like to minimize an objective of the form $\sum_{i=1}^{M}\frac{1}{p(\tilde{x}_{\pi(i)})}$ (or any other function of just $p(\tilde{x}_{\pi(i)})$, the output of the estimation problem) as this does not lead to an operational cost in the true sense. This operational cost does not make use of latency information or other graph properties related to routing unless $p(\tilde{x}_{\pi(i)})$ implicitly depends on them (which is not the case here). 
\end{remark}
}
 \textcolor{black}{
 Now that the major steps for both formulations have been defined, we will discuss methods for optimizing the objectives. 
}

\section{Optimization}\label{sec:minlps}
 We start by formulating mixed-integer linear programs (MILP's) for the TRP subproblem.

\subsection{Mixed-integer optimization for Cost 1}
For either the sequential or simultaneous processes, we need the solution of the subproblem:
$\pi^* \in \argmin_{\pi\in\Pi} \textrm{OpCost}(\pi,f_{\lambda}^*,\{\tilde{x}_{i}\}_{i=1}^M,\{d_{i,j}\}_{i,j=1}^M)$,  or equivalently,
\begin{eqnarray}
\pi^{*}&\in& \argmin_{\pi\in\Pi} \sum_{i=2}^M p(\tilde{x}_{\pi(i)})
\sum_{k=1}^M d_{\pi(k) \pi(k+1)}  \one_{[k < i]} + p(\tilde{x}_{\pi(1)})
\sum_{k=1}^M d_{\pi(k) \pi(k+1)}.
\label{mytrp}
\end{eqnarray}
\textcolor{black}{
Let us compare this to the standard traveling repairman problem (TRP) problem \citep[see][]{blum94}:
\begin{equation}
\pi^*\in\argmin_{\pi \in \Pi}  \sum_{k=1}^M   d_{\pi(k) \pi(k+1)}(M+1-k).
\label{eqn:standard_trp}
\end{equation}
The standard TRP objective (\ref{eqn:standard_trp}) is a special case of the weighted TRP (\ref{mytrp}) when $\forall i=1,...,M, \; p(\tilde{x}_i)=p$:
\begin{eqnarray*}
\lefteqn{\sum_{i=2}^M p(\tilde{x}_{\pi(i)}) \sum_{k=1}^M d_{\pi(k) \pi(k+1)}  \one_{[k < i]} + p(\tilde{x}_{\pi(1)}) \sum_{k=1}^M d_{\pi(k) \pi(k+1)}}\\
&=& p \sum_{i=2}^{M} \sum_{k=1}^{M} d_{\pi(k) \pi(k+1)}\one_{[k < i]} + p\sum_{k=1}^{M}d_{\pi(k) \pi(k+1)} \\
&=& p \sum_{i=2}^{M} \sum_{k=1}^{M} d_{\pi(k) \pi(k+1)}\one_{[k < i]} + p\sum_{k=1}^{M}d_{\pi(k) \pi(k+1)} \one_{[k < M+1]}\\
&=& p  \sum_{k=1}^{M} d_{\pi(k) \pi(k+1)} \sum_{i=2}^{M+1} \one_{[k < i]}
=   p\sum_{k=1}^M d_{\pi(k) \pi(k+1)} (M+1-k).
\end{eqnarray*}
}
\textcolor{black}{
The TRP is different from the traveling salesman problem (TSP); the goal of the traveling salesman problem is to minimize the total traversal time (in this case, this is the same as the distance traveled) needed to visit all nodes once, whereas the goal of the traveling repairman problem is to minimize the sum of the waiting times to visit each node. Both the TSP and the TRP are known to be NP-complete in the general case \citep{blum94}. 
Intuitively, a TRP route cost objective captures the total waiting cost of a service system from the customer's (the node's) point of view. For example, consider a truck carrying prioritized items to be delivered to customers. At each customer's stop, that customer's item is removed from the truck. The goal of the TRP is to minimize the total waiting time of these customers.
} 

We start by extending an integer programming formulation of standard TRP \citep[][]{fischetti93} to include ``unequal flow values" so that we can solve (\ref{mytrp}) \textcolor{black}{
\citep[there are many other integer programming formulations in the literature as well, see for instance][]{lucena08}.
}
The weights $\{\bar{p}(\tilde{x}_{i})\}_i$ within the formulation below will be defined later. 
For interpretation, consider the sum of the probabilities $\sum_{i=1}^{M} \bar{p}(\tilde{x}_{i})$ as the total ``flow'' through a route. At the beginning of the tour, the repair crew has flow $\sum_{i=1}^{M} \bar{p}(\tilde{x}_{i})$. Along the tour, flow of the amount $\bar{p}(\tilde{x}_{i})$ is dropped when the repair crew visits node $\pi(i)$ at latency $L_{\pi}(\pi(i))$. In this way, the amount of flow during the tour is the sum of the probabilities $\bar{p}(\tilde{x}_{i})$ for nodes that the repair crew has not yet visited.  
We introduce two sets of variables $\{z_{i,j}\}_{i,j}$ and $\{y_{i,j}\}_{i,j}$ that together represent a route (instead of the $\pi$ notation). 
Let $z_{i,j}$ represent the flow on edge $(i,j)$ and let a binary variable $y_{i,j}$ represent whether there exists a flow on edge $(i,j)$. (There will only be a flow along the route, and there will not be a flow along edges that are not in the route.) The mixed-integer program is as follows:
\begin{eqnarray}
\min_{z,y}  \sum_{i=1}^M \sum_{j=1}^M   d_{i,j} z_{i,j} \textrm{\quad s.t.}\label{eqwtrp:obj}\\
\textrm{No flow from node $i$ to itself:}\;\; z_{i,i} = 0\;\; \forall i=1,...,M \label{eqwtrp:diag_ele1}\\
\textrm{No edge from node $i$ to itself:}\;\;y_{i,i} = 0 \;\;\forall i=1,...,M \label{eqwtrp:diag_ele2}\\
\textrm{Exactly one edge into each node:}\;\; \sum_{i=1}^M y_{i,j} = 1 \;\;\forall j=1,...,M \label{eqwtrp:colsum}\\
\textrm{Exactly one edge out from each node:}\;\;\sum_{j=1}^M y_{i,j} = 1 \;\;\forall i=1,...,M\label{eqwtrp:rowsum}\\
\textrm{Flow coming back to initial point at the end of loop:}\;\;\sum_{i=1}^M z_{i,1} = \bar{p}(\tilde{x}_1) \label{eqwtrp:lastflow}\\
\nonumber
\textrm{Change of flow after crossing node $k$:}\\
\sum_{i=1}^M z_{i,k} - \sum_{j=1}^M z_{k,j} = \left\{
\begin{array}{ll}
\bar{p}(\tilde{x}_1) - \sum_{i=1}^M \bar{p}(\tilde{x}_i) & k=1 \\
\bar{p}(\tilde{x}_k) & k=2,...,M
\end{array}
\right.\label{eqwtrp:interflow} \\
\textrm{Connects flows $z$ to indicators of edge $y$:}\;\;\;
z_{i,j} \leq r_{i,j} y_{i,j}\label{eqwtrp:x_ub} \\
\textrm{where  } r_{i,j} = \left\{
\begin{array}{ll}
\bar{p}(\tilde{x}_1) & j=1 \\
\sum_{i=1}^M \bar{p}(\tilde{x}_i) & i=1\\
\sum_{i=2}^M \bar{p}(\tilde{x}_i) & \mbox{otherwise.}\\
\end{array}
\right. \nonumber
\end{eqnarray}
Constraints (\ref{eqwtrp:diag_ele1}) and (\ref{eqwtrp:diag_ele2}) restrict self-loops from forming. Constraints (\ref{eqwtrp:colsum}) and (\ref{eqwtrp:rowsum}) ensure that every node should have exactly one edge coming in and one going out. Constraint (\ref{eqwtrp:lastflow}) represents the flow on the last edge coming back to the starting node. Constraint (\ref{eqwtrp:interflow}) quantifies the flow change after traversing a node $k$. Constraint (\ref{eqwtrp:x_ub}) represents an upper bound on $z_{i,j}$ relating it to the corresponding binary variable $y_{i,j}$. We can define the weights $\bar{p}(\tilde{x}_{i})$, for example, for Cost 1, to be equal to the estimated failure probabilities $1/(1+e^{-\lambda \cdot \tilde{x}_i })$.

\subsection{Mixed integer optimization for Cost 2}
\label{subsec:jointminimize_model2}
\textcolor{black}{
Here we reason about the choice for changing the cost per node in (\ref{model2costLi}) to resemble (\ref{alternative}). 
}
Starting with the sum (\ref{Cost2}) \textcolor{black}{ over node costs  (\ref{model2costLi})}, 
we apply the $\log$ function to the second term of the cost of each node \textcolor{black}{(\ref{model2costLi})} 
to get a new cost 
$
\left(1 - \log\left(1+ e^{f_{\lambda}(\tilde{x}_{\pi(i)})}\right)^{-L_{\pi}(\pi(i))}\right),
$
and the new minimization problem is: 
\begin{eqnarray*}
\min_{\pi}& \sum_{i=1}^{M}& \left( 1 - \log \left(1+ e^{f_{\lambda}(\tilde{x}_{\pi(i)})}\right)^{-L_{\pi}(\pi(i))} \right)\\
 &=& -\max_{\pi} \left( \sum_{i=1}^{M} \log \left(1+ e^{f_{\lambda}(\tilde{x}_{\pi(i)})}\right)^{-L_{\pi}(\pi(i))} - \textcolor{black}{\textrm{const}} \right)\\
 &=&  \min_{\pi} \left[\sum_{i=1}^{M} L_{\pi}(\pi(i)) \log \left(1+ e^{f_{\lambda}(\tilde{x}_{\pi(i)})}\right)\right] + \textcolor{black}{\textrm{const}},
\end{eqnarray*}
where the first term is the sum over nodes of the expression (\ref{alternative}).
This failure cost term is now a weighted sum of latencies where the weights are of the form $\log\left(1+ e^{f_{\lambda}(\tilde{x}_{\pi(i)})}\right)$. We can thus reuse the mixed integer program (\ref{eqwtrp:obj})-(\ref{eqwtrp:x_ub}) where the weights are redefined as $\bar{p}(\tilde{x}_i):= \log\left(1+e^{\lambda \cdot \tilde{x}_i }\right)$.

\textcolor{black}{
Our choices for the cost and failure models above allow us to use
a weighted version of the intuitive minimum latency or TRP problem for routing. In particular, the log transformation of individual terms in the original version of Cost 2, (\ref{Cost2}), precisely serves this purpose. 
}
\textcolor{black}{
In general, depending on the way we define the operational cost and the failure model, they may not necessarily map back to popular routing problems like the TRP as we have here. Nonetheless, there are many valid approaches beyond what we pursue this in this paper.
}

\textcolor{black}{
Now that the TRP subproblem has been completely defined for both Cost 1 and Cost 2, we will discuss first how to solve the subproblem alone, which is Step 2 of the sequential process. Then we will discuss the solvers for the simultaneous process.
}

\subsection{Solving the weighted TRP subproblem}
\textcolor{black}{
A generic MILP solver like CPLEX\footnote{IBM ILOG CPLEX Optimization Studio v12.2.0.2 2010} or Gurobi\footnote{Gurobi Optimizer v3.0, Gurobi Optimization, Inc. 2010} can produce an exact solution using branch-and-bound or other related exact methods. We use Gurobi. 
 The weighted TRP problem is NP-hard (can be shown by a reduction to the \textcolor{black}{Hamiltonian} cycle problem) and hence most likely not solvable by polynomial-time algorithms. The standard unweighted (all weights equal) TRP can be encoded by different mixed-integer programming formulations \citep[see][]{fischetti93,eijl95,lucena08} each with different performance guarantees (e.g., solving 15-60 nodes), which could be adapted for our purpose. 
 There are also techniques for producing constant factor approximate solutions to the unweighted TRP \citep{goemans98,blum94,arora00,archer08,archer10}, which could run faster than the MILP solvers for large problems. 
 If the weights $\{w_i\}_i$ are integers, we can adapt these faster techniques for the standard problem to the weighed TRP problem by replicating each node $w_{i}$ times. If the weights are rational, as is the case in (\ref{eqn:learning1}) and (\ref{eqn:learning2}), we can use rounding and discretization in order to apply the faster solution techniques for solving the standard TRP. 
}

\subsection{Solving Mixed-integer nonlinear programs (MINLPs)}
\label{subsec:MINLPsolve}
\textcolor{black}{
For the simultaneous process, the inputs to the program are training data $\{x_i,y_i\}^{m}_{i=1}$, unlabeled nodes $\{\tilde{x}_i\}_{i=1}^{M}$ the distances between them $\{d_{i,j}\}_{i,j=1}^M$ and constants $C_{1}$ and $C_{2}$. 
}
The full simultaneous process formulation using Cost 1 is:
\begin{eqnarray}
\min_{\lambda} \left(\sum_{i=1}^m \ln\left(1+e^{-y_if_{\lambda}(x_i)}\right)+C_{2}\|\lambda\|_{2}^{2} 
+C_{1}\min_{\{z_{i,j},y_{i,j}\}}\sum_{i=1}^M \sum_{j=1}^M   d_{i,j}z_{i,j} \right) \label{eqn:learning1}\\
\nonumber \textrm{subject to constraints (\ref{eqwtrp:diag_ele1}) to (\ref{eqwtrp:x_ub}), where } \bar{p}(\tilde{x}_{i}) = \frac{1}{1+e^{-\lambda \cdot \tilde{x}_i }}.
\end{eqnarray} 
The full formulation using the modified version of Cost 2 is:
\begin{eqnarray}
\min_{\lambda} \left(\sum_{i=1}^m \ln\left(1+e^{-y_if_{\lambda}(x_i)}\right) +C_{2}\|\lambda\|_{2}^{2} 
+C_{1}\min_{\{z_{i,j},y_{i,j}\}}\sum_{i=1}^M \sum_{j=1}^M   d_{i,j}z_{i,j} \right) \label{eqn:learning2}\\
\nonumber \textrm{subject to constraints (\ref{eqwtrp:diag_ele1}) to (\ref{eqwtrp:x_ub}) hold, where } \bar{p}(\tilde{x}_{i}) =\log\left(1+e^{\lambda \cdot \tilde{x}_i }\right).
\end{eqnarray} 
\textcolor{black}{
If we have an algorithm for solving (\ref{eqn:learning1}), then the same scheme can be used to solve (\ref{eqn:learning2}). 
}
\textcolor{black}{
There are multiple ways of solving (or approximately solving) a mixed integer nonlinear optimization problem of the form (\ref{eqn:learning1}) or (\ref{eqn:learning2}). 
}
We consider three methods in this paper for solving (\ref{eqn:learning1}) and (\ref{eqn:learning2}).
\begin{itemize}
\item Generic mixed integer non-linear programming (MINLP) solver (Bonmin). 
\item Nelder-Mead (NM) which is a iterative scheme over the $\lambda$ parameter space, solving a weighted TRP subproblem in each iteration. 
\item Alternating Minimization (AM) which alternatively minimizes over $\lambda$ and $\pi$ optimization variables.
\end{itemize}
\textcolor{black}{
 \subsubsection*{Method 1: MINLP Solver}
 For our experiments we directly use a MINLP solver called Bonmin \citep{bonmin}. These types of solvers typically use general MILP solving techniques like branch and bound or dynamic programming interleaved with continuous optimization. Since the general MILP solving techniques, as discussed, can take exponential time when applied directly to our formulations, the MINLP solvers which use them can in turn, be inefficient if the graph is moderate to large in size. However, when the graph is small, for instance when we want to schedule a tour over only a few nodes, the MINLP solver can generally compute a solution to the problems (\ref{eqn:learning1}) or (\ref{eqn:learning2}) in a manageable period of time.
 }
 \textcolor{black}{
 \subsubsection*{Method 2: Nelder-Mead in $\lambda$-space (NM)}
 The Nelder-Mead minimization algorithm requires only function evaluations \citep{neldermead}. The ML\&TRP can be viewed as a minimization in the space of all $\lambda$ vectors; since we have solvers for the weighted TRP subproblem, we are able to evaluate the ML\&TRP objective for a given value of $\lambda$. In our experiments we use the MILP solver (Gurobi) for the subproblem. Note that the ML\&TRP objective can have non-differentiable kinks arising from discontinuities in the failure cost term; a method that relies on the gradient or Hessian of the objective function might get stuck in narrow local minima, whereas methods that use only function evaluations may not have this problem. The generic Nelder-Mead scheme can have disadvantages with respect to performance \citep{rios09}, in which case, other schemes like Multilevel Coordinated Search (MCS) \citep{neumaier98} can be used in place of Nelder-Mead. Note that since the objective is non-convex, all solutions obtained by NM are only guaranteed to be locally optimal.
 }

\subsubsection*{Method 3: Alternating minimization in $\lambda$-$\pi$ space (AM)}
Our alternating minimization scheme also operates in the $\lambda$-$\pi$ space
as follows. Define the objective Obj as a function of $\lambda$ and $\pi$: 
\begin{equation*}
\textrm{Obj}(\lambda,\pi) =  
\sum_{i=1}^m \ln \left(1+e^{-y_if_{\lambda}(x_i)}\right) + C_{2}\|\lambda\|_{2}^{2}
+C_{1} \textrm{OpCost}\left(\pi,f_{\lambda},\{\tilde{x}_{i}\}_{i=1}^M,\{d_{i,j}\}_{i,j=1}^M\right).
\end{equation*}
Starting from an initial vector $\lambda_{0}$, Obj is minimized alternately with respect to $\lambda$ and then with respect to $\pi$, as shown in Algorithm \ref{figure:am}. 
The second step, solving for $\pi$, is the same as solving the TRP subproblem, and we again use the MILP solver for this. Conditions for convergence and correctness for such iterative schemes are given by \citet{csiszar84}; again, it is not possible to guarantee globally optimal solutions using this method.

\begin{algorithm}[t]
\label{algAM}
\begin{algorithmic}
\STATE 
\STATE \textbf{Inputs:} $\{x_{i},y_{i}\}_{1}^{m}, \{\tilde{x}_{i}\}_{1}^{M}, \{d_{ij}\}_{ij}, C_{1}, C_{2}, T$ and initial vector $\lambda_{0}$.

\FOR{t=1:T}
        \STATE Compute $\pi_t \in \argmin_{\pi\in \Pi}  \textrm{Obj}(\lambda_{t-1},\pi)$.
        \STATE  Compute $\lambda_t \in \argmin_{\lambda\in \R^d}  \textrm{Obj}(\lambda,\pi_{t})$. 
\ENDFOR 
\STATE \textbf{Output:} $\pi_T$.
\end{algorithmic}
\caption{AM: Alternating minimization algorithm\label{figure:am}}
\end{algorithm}


\subsection{Illustrative Experiment}\label{subsec:illus}
We will use the ML\&TRP to show the fundamental property motivating the MLOC framework: that
a large change in the probability model does not necessarily lead to a large change in overall prediction accuracy, but may lead to very different solutions. 

The training set was chosen uniformly at random from a distribution that is uniform over two triangles pointing end to end. We used six unlabeled points as the nodes. See Figure \ref{fig:example6node_feature}. In addition a level set, colored black, is also plotted. It is the estimated level set for $P(y=1|x)= 0.5$ learned from $\ell_{2}$-regularized logistic regression. A second level set, colored red, also drawn at probability estimate $0.5$, is learned from the simultaneous process, with failure cost modeled according to Cost 1. Now, node $6$ (triangle with label ``$\tilde{x}_{6}$'') lies in a low density region of feature space, so its probability cannot be well estimated. For the sequential formulation, node $6$ was assigned $p(\tilde{x}_{6}) = 0.5$ and the optimal route obtained by solving the weighted TRP problem is 1-2-3-6-4-5-1, shown in Figure \ref{fig:physical_space_2step}. \textcolor{black}{The node represented by $\tilde{x}_{1}$ is chosen to be the starting point}. For the simultaneous process, node $6$ has been assigned a new probability value $p(\tilde{x}_{6}) = 0.29$. This change is possible because node 6's probability estimate can vary quite a lot without changing the probability estimates of others. This changes the route to 1-2-3-4-5-6-1 as shown in Figure \ref{fig:physical_space_joint}.

In the simultaneous process, we chose $C_{1}$ large enough so that the tour route visits $4$ and $5$ before $6$. This results in a $\sim 9$\% decrease in the failure cost (Cost 1), with a $\sim 3$\% change in the training error (logistic loss). In particular, for the sequential process, Cost 1 is 4.7 units and the training error is 15.7 units; for the simultaneous process, Cost 1 is 4.25 units and the learning error is 16.2 units ($C_{1} = 5\times10^{-4}$). This is an illustration of the core of MLOC: both predictive models are good, and a range of operational costs and decisions exist between them.

\begin{figure}
\centering
\subfigure[]{
        \begin{overpic}[width=0.35\textwidth]{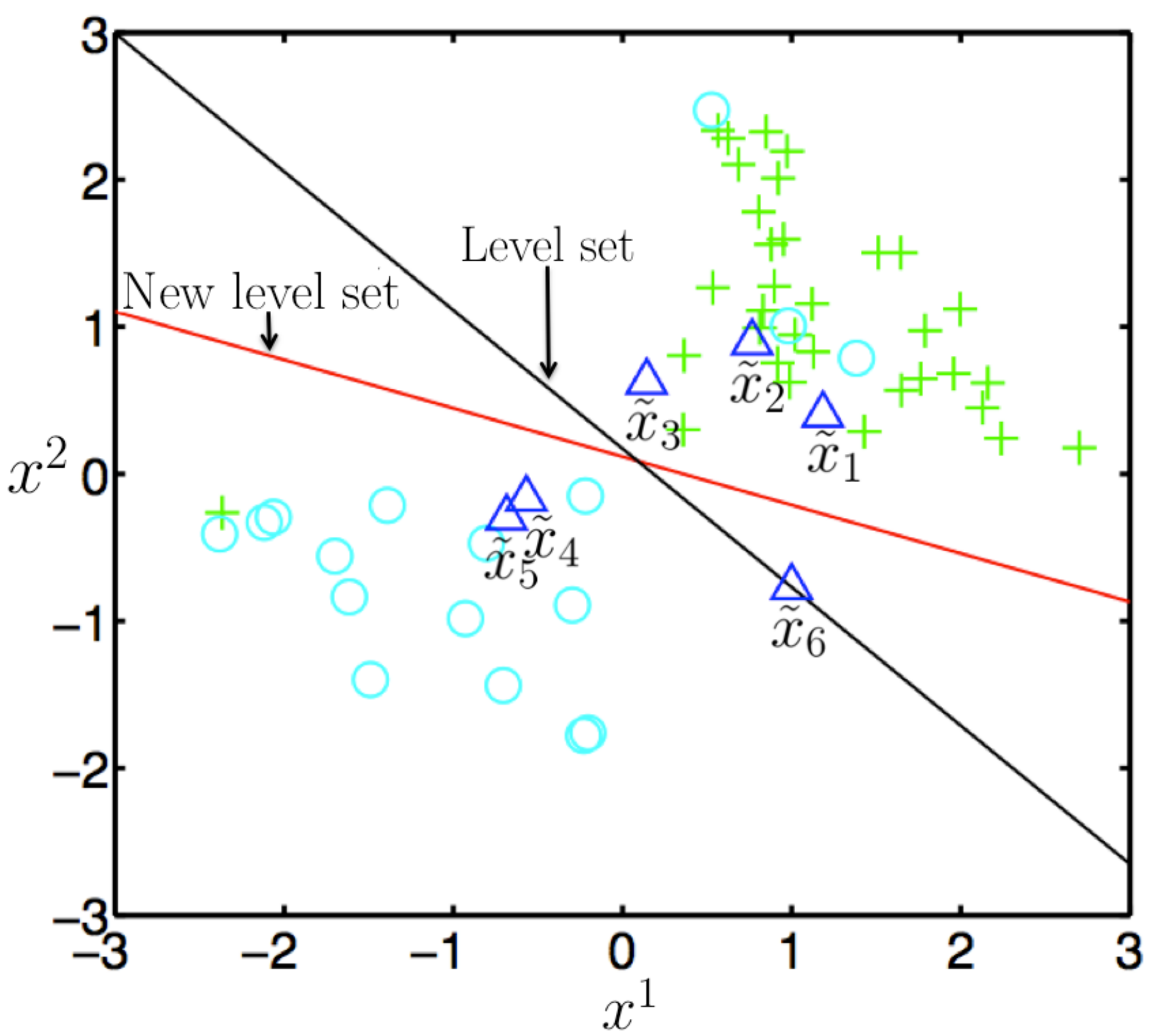}
        \end{overpic}
     \label{fig:example6node_feature}
     }
\subfigure[]{
        \includegraphics[width=0.6\textwidth]{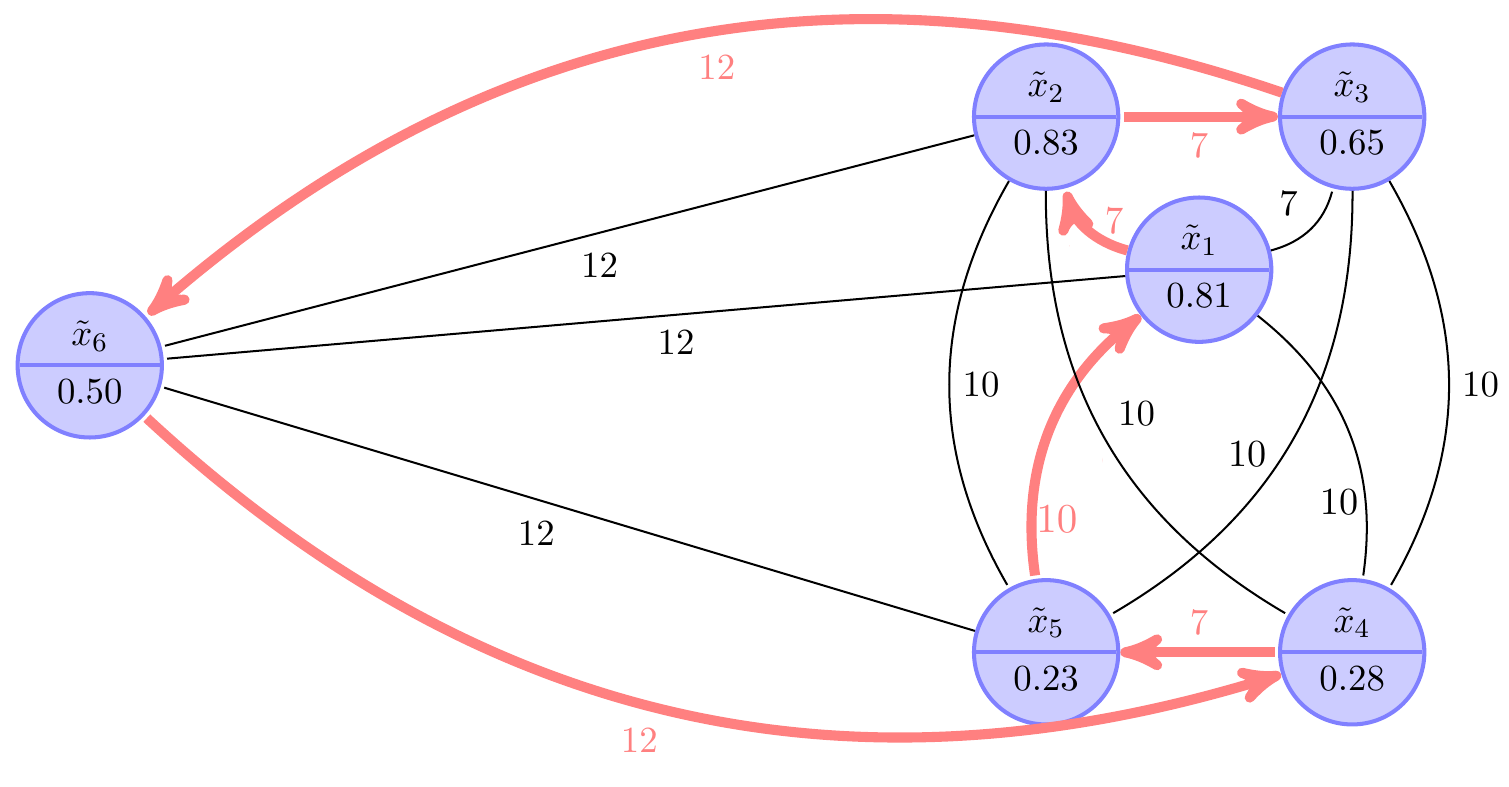}
        \label{fig:physical_space_2step}
     }
\caption{Left: $x^{1}$ and $x^{2}$ represent the first and second coordinates respectively of the 2D feature space. 
The triangles represent the unlabeled data $\{\tilde{x}_{i}\}_{i=1}^{6}$.
Right: The numbers in the nodes indicate their probability of failure, and the numbers on the edges indicate distances. The optimal route 1-2-3-6-4-5-1 as determined by the sequential formulation is highlighted.
}
\end{figure}

\begin{figure}
\centering
\includegraphics[width=0.6\textwidth]{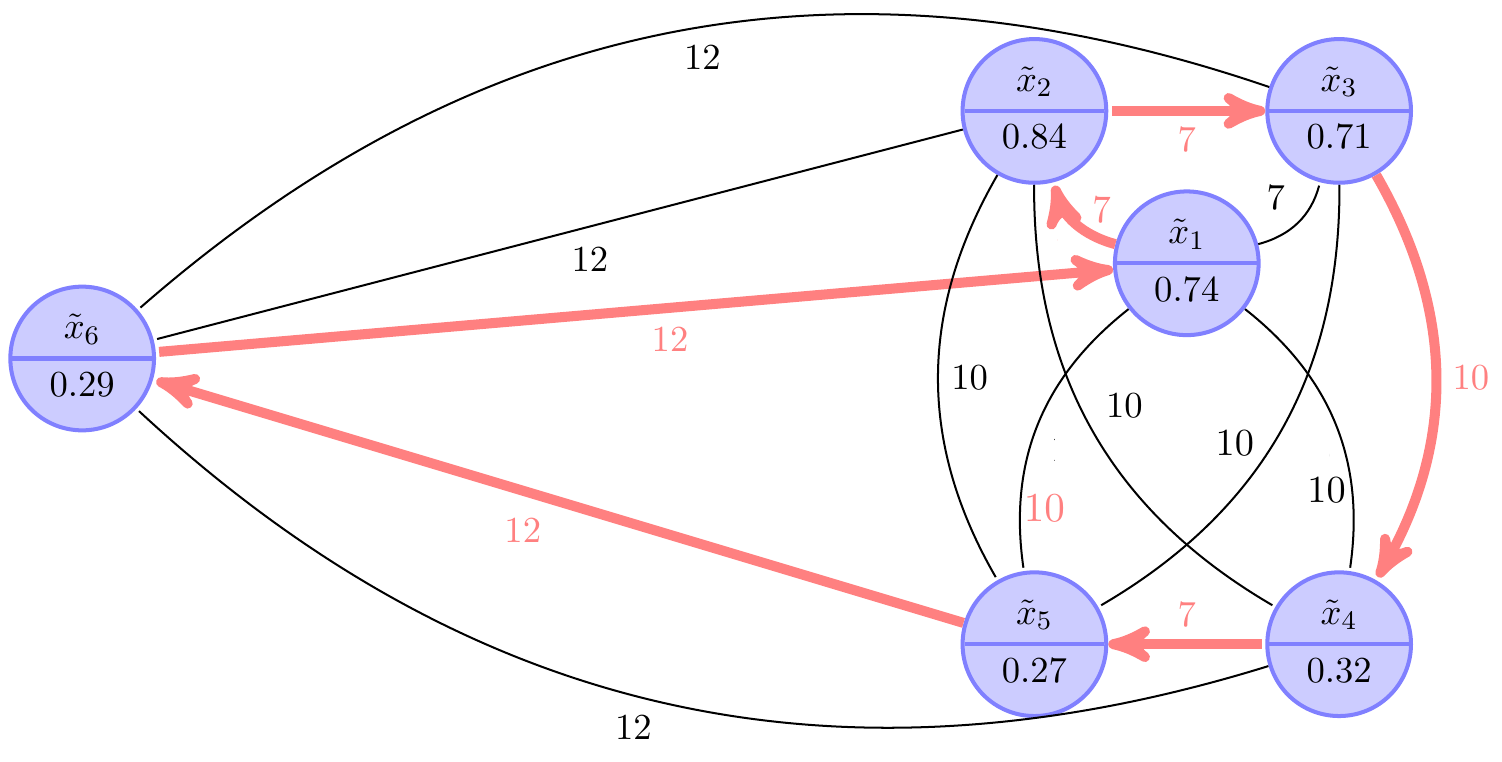}
\caption{The optimal route 1-2-3-4-5-6-1 determined by the simultaneous process is highlighted.
\label{fig:physical_space_joint}}
\end{figure}

\section{ML\&TRP on the NYC power grid}\label{sec:grid}
\textcolor{black}{
We now show how the MLOC framework might be used to assist companies like Con Edison, which is NYC's power utility company. We pursue three sets of experiments. The first experiment demonstrates the use of the simultaneous process when given a specific routing problem. This shows how a practitioner would use the simultaneous process in practice. In the second experiment, we randomize over the training sample and routing problems.  This experiment shows that the simultaneous process can find models that are equally predictive or better than the sequential method when operational costs are included. In the third experiment, we look at scaling issues. 
}

\textcolor{black}{
In all these experiments, we are predicting the probability of failure over the course of a year. While using the predicted failure probabilities in the routing problem, we will assume that these are probabilities of failures in an arbitrary unit interval of time. In particular, they can be the probability of failures over an hour, a day etc. We make the approximation that the probabilities at finer time scales (required for the routing problem) are proportional to the probabilities at coarser time scales for the purpose of our experiments.
}

\subsection{The dataset}
The dataset we use is described by  \citet{RudinETAL10}, which was developed in order to assist Con Edison with its maintenance and repair programs on the secondary electrical distribution network in NYC; specifically, it was designed for the purpose of predicting manhole fires and explosions. We chose to use all manholes from the Bronx ($\sim$23K manholes). Each manhole is represented by (4-dimensional) features that encode the number and type of electrical cables entering the manhole and the number and type of past events involving the manhole. The event features encode how often in the past the manhole was the source of partial outages, full outages and/or underground burnouts. The training features encode events prior to 2008, and the training labels are 1 if the manhole was the source of a serious event (fire, explosion, smoke) during 2008. The prediction task is to predict events in 2009. The test set (for evaluating the performance of the predictive model) consists of features derived from the time period before 2009, and labels from 2009. In our experiments, for both training and test we had a large sample (23,217 instances). There were 211 and 132 failure instances in the test and training data respectively.

\subsection{Performance of the simultaneous process for a seven node decision problem}
\textcolor{black}{In this experiment, the operational task is to design a route for a repair crew that is equipped to fix seven relatively more vulnerable manholes in 2009. The distances between the nodes were obtained from Google Maps, by querying the driving distance between each pair of nodes. Note that we do not want `flying' distance between two coordinates as this can be very different from the actual driving distance, especially in New York City.
}

\textcolor{black}{
The limited resources for inspection and repair of manholes should generally be designated to the most vulnerable manholes. With uncertainty in many of the probability estimates, if we are not careful, it is possible that most of these resources will be spent in dealing with outliers whose probabilities are overestimated. The simultaneous process will generally prevent this from happening if we choose $C_1$ to have a sufficiently large positive value.
}

\textcolor{black}{
Manhole failures are rare events. This means there are many more negative labels than positive labels. Using a logistic model gives probability estimates which are low overall, so the misclassification error is almost always the size of the whole positive class. Because of this, we evaluate the quality of the predictions from $f_{\lambda^{*}}$ using the area under the ROC curve (AUC), for both training and test. AUC is a measure of ranking quality; it is sensitive to the rank-ordering of the nodes in terms of their probability to fail, and it is not as sensitive to changes in the values of these probabilities. This means that as the parameter $C_1$ increases, the estimated probability values will tend to decrease, and thus the failure cost will decrease.
}

\textcolor{black}{For the experiment, a specific decision problem was sampled and fixed a priori,  involving repairs on a handful of relatively more vulnerable manholes in the Bronx.}
We solved (\ref{eqn:learning1}) and (\ref{eqn:learning2}) for a range of values for the regularization parameter $C_1$, for both costs and all three methods, with the goal of seeing whether for the same level of  estimation performance, we can get a range in the cost of failures. In particular, we 
\textcolor{black}{wanted} to know if we \textcolor{black}{could} see a substantial reduction in the cost.
We varied $C_1$ so that the variation in the training error term across the methods was small, about 2\% away from the solution of the sequential process ($C_1=0$), see Figure \ref{fig:term1_loss_7nd_m1m2}. For that range, the test AUC values for the simultaneous process were all within 1\% of each other; this is true for both Cost 1 and Cost 2, for each of the AM, NM, and MINLP solvers, see Figures \ref{fig:plot_auc_7nd_m1} and \ref{fig:plot_auc_7nd_m2}. So, changing $C_{1}$ did not dramatically impact the prediction quality as measured by the AUC. On the other hand, the failure costs varied widely over the different methods and settings of $C_1$, as a result of the change in the probability estimates, as shown in Figure \ref{fig:traversal_cost_plot_7nd_m1m2}.
As $C_{1}$ was increased from $0.05$ to $0.5$, Cost 1 went from $27.5$ units to $3.2$ units, which is over eight times smaller. This means that with a 1-2\% variation in the predictive model's AUC, the operational cost can decrease a lot, yielding a completely different possible route for inspection and/or repair work. The reason for an order of magnitude change in the failure cost is because the probability estimates vary by an order of magnitude due to uncertainty at the nodes. This uncertainty in costs is what the MLOC allows us to uncover.

\begin{figure}
     \centering
     \subfigure[]{
     \begin{overpic}[width=.45\textwidth]{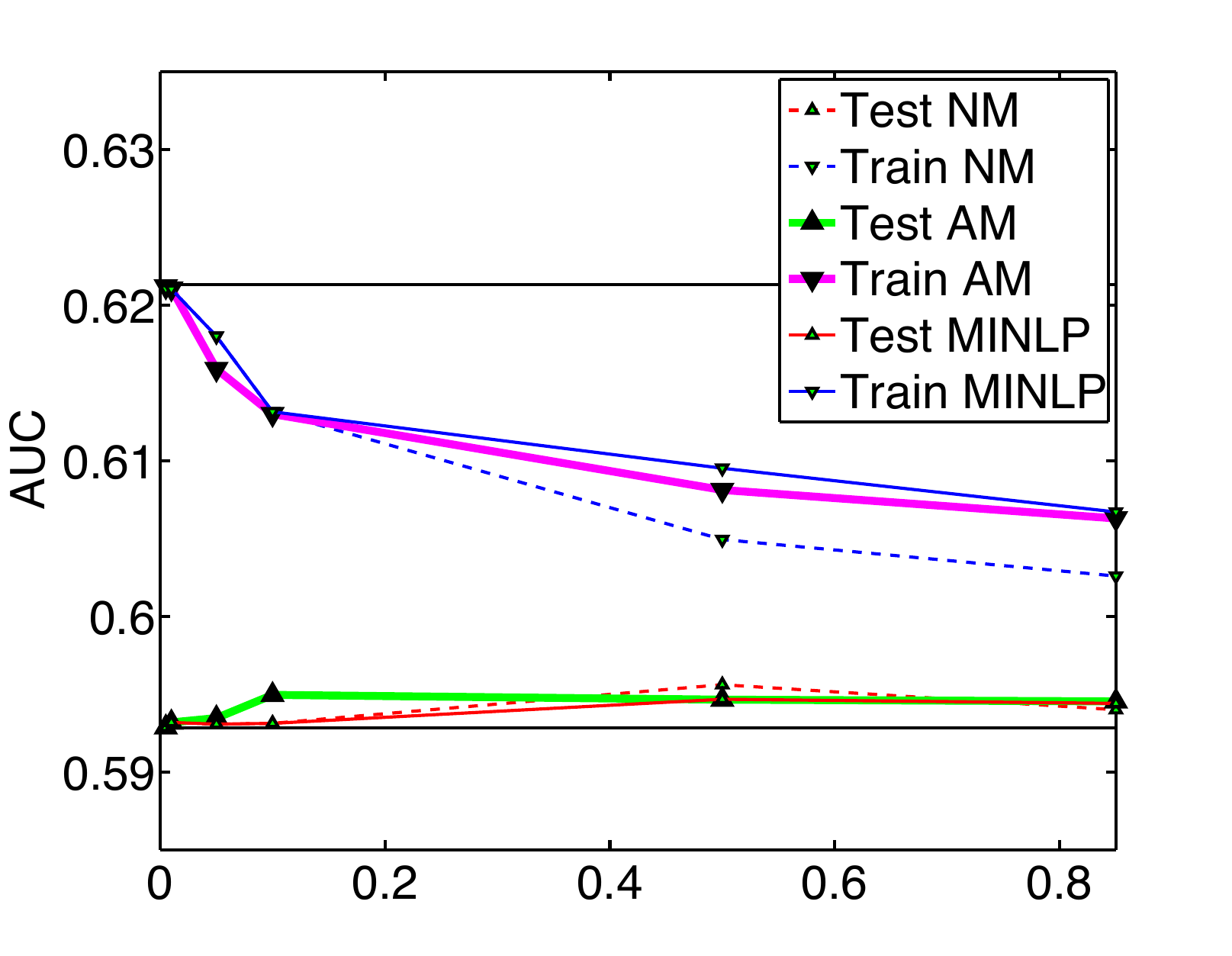}
          \put(80,0){\large $C_{1}$}
     \end{overpic}
     \label{fig:plot_auc_7nd_m1}
     }
     \subfigure[]{
      \begin{overpic}[width=.45\textwidth]{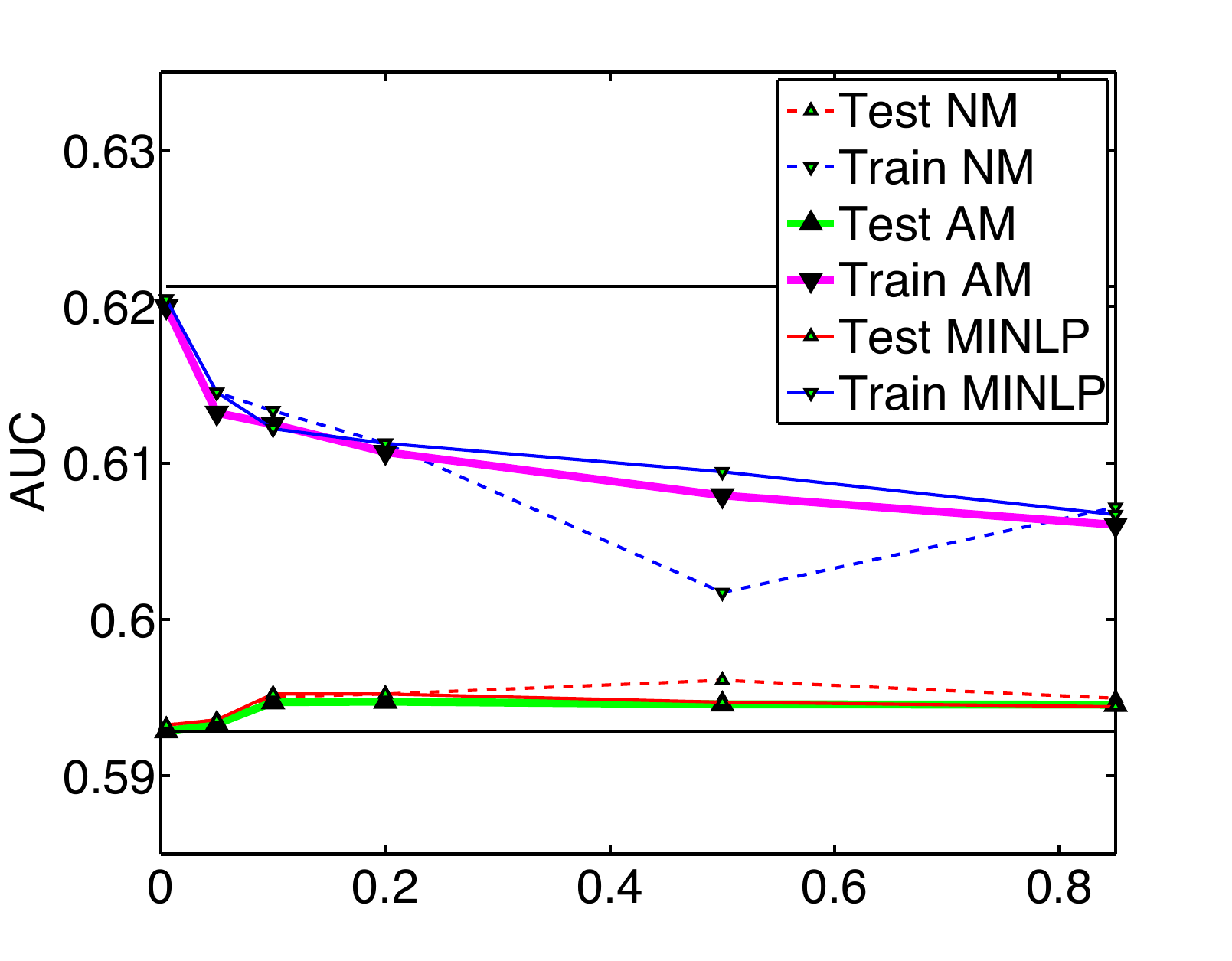}
           \put(80,0){\large $C_{1}$}
     \end{overpic}
     \label{fig:plot_auc_7nd_m2}
     }
     \caption{Left: The AUC values corresponding to models (parameterized by $C_{1}$) obtained from the simultaneous process using Cost 1 by NM and AM and MINLP techniques. The AUC values on the training data decrease slightly and the same values for test data increase marginally. The two horizontal lines represent the training and test AUC values obtained by $\ell_{2}$-penalized logistic regression are constant with respect to $C_{1}$. Right: Similar AUC values obtained from the simultaneous process, using Cost 2.
}
\end{figure}

\begin{figure}
     \centering
     \subfigure[]{
     \begin{overpic}[width=.45\textwidth]{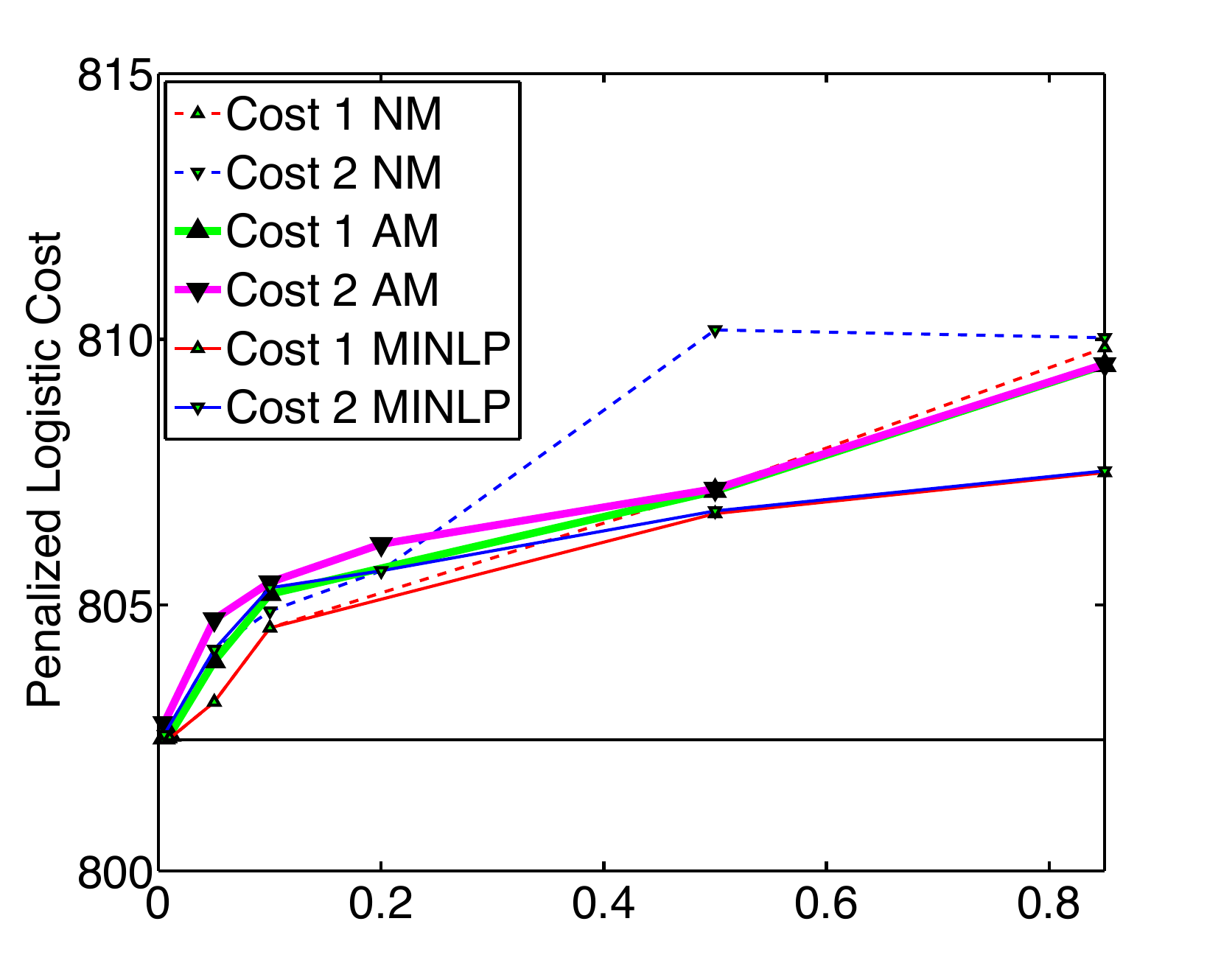}
     \put(80,0){\large $C_{1}$}
     \end{overpic}
     \label{fig:term1_loss_7nd_m1m2}
     }
        \subfigure[]{
     \begin{overpic}[width=.45\textwidth]{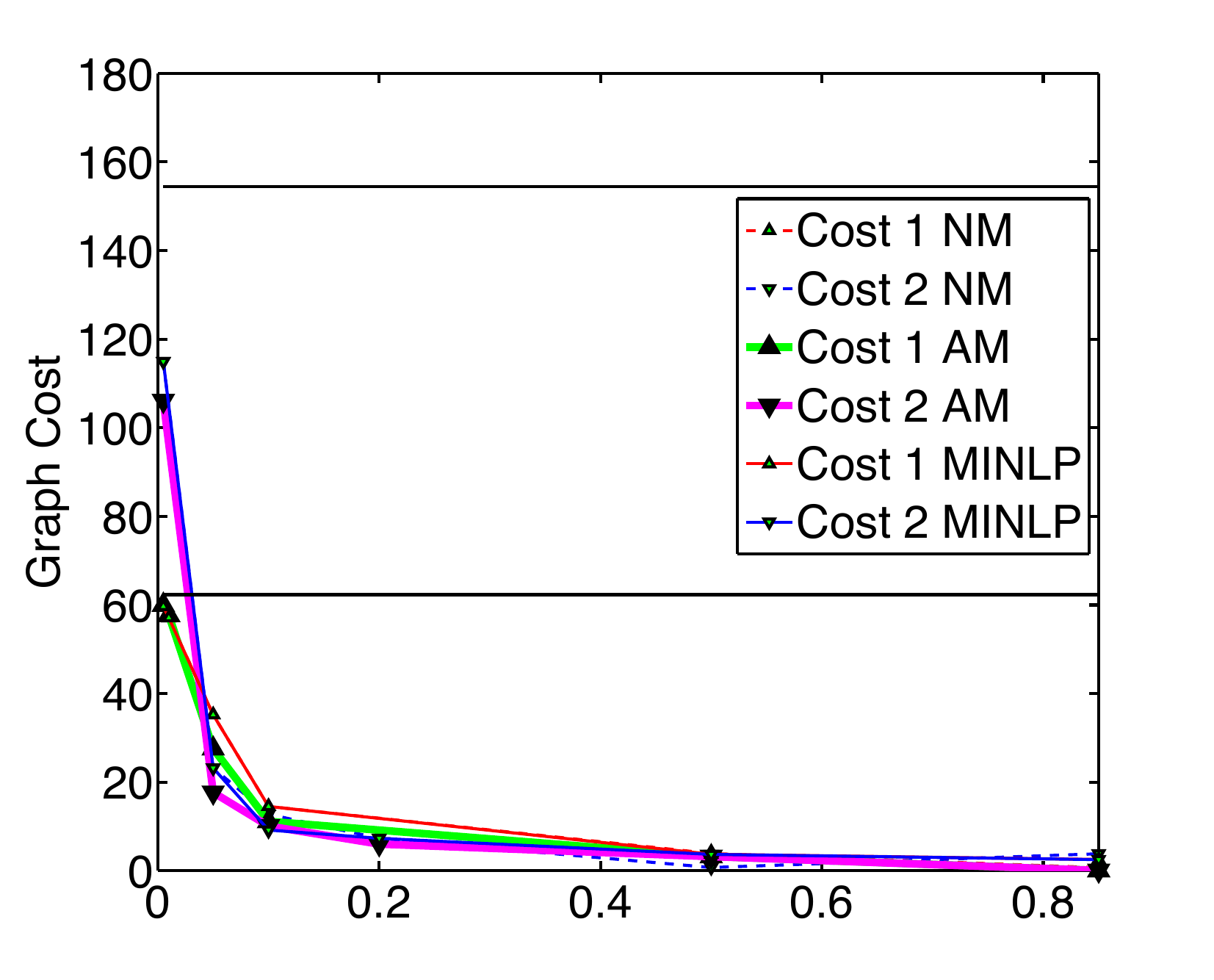}
          \put(80,0){\large $C_{1}$}
     \end{overpic}
     \label{fig:traversal_cost_plot_7nd_m1m2}
     }
     \caption{Left: The $\ell_{2}$-regularized logistic loss increases as a function of increasing $C_{1}$. The horizontal line represents the loss value from $\ell_{2}$-penalized logistic regression with no regularization ($C_{1} = 0$).    Right: The failure costs decrease as a function of the regularization parameter $C_{1}$. The horizontal lines in the figure represent the sequential formulation solution; the lower horizontal line is Cost 1 of the solution obtained by $\ell_{2}$-penalized logistic regression, and the upper line is Cost 2 of that solution. }
\end{figure}

In Figures \ref{fig:route_7nd_naive}-\ref{fig:route_7nd_optimal} we show the routes according to the different algorithms. We first provide the na\"ive route in Figure \ref{fig:route_7nd_naive}, which was obtained by estimating probabilities using $\ell_{2}$-penalized logistic regression, and then simply visiting nodes according to decreasing values of these probabilities. Figure \ref{fig:route_7nd_seq} shows the route provided by the sequential process. When the failure term starts influencing the optimal solution of the objective (\ref{eqn:learning1}) because of an increase in $C_{1}$, we get a new route, depicted in Figure \ref{fig:route_7nd_optimal}. In most applications relevant to this problem, we suspect that the solution used in practice is somewhere in between the na\"ive route and the sequential route, in that a human views the na\"ive solution and adjusts it by hand to be closer to the sequential route (without solving the TRP). For the application to electrical grid maintenance, the simultaneous process was able to find a substantially lower cost route  \textcolor{black}{than} the na\"ive or sequential process, with little (if any) change in the AUC prediction quality. This demonstration on data from the Bronx indicates that it is possible to better understand uncertainty in modeling. If engineers truly believe the costs will be lower, their belief, combined with the route we found, can be used to justify a much more cost-effective solution.

\begin{figure}
     \centering
     \subfigure[]{
     \includegraphics[width=.31\textwidth]{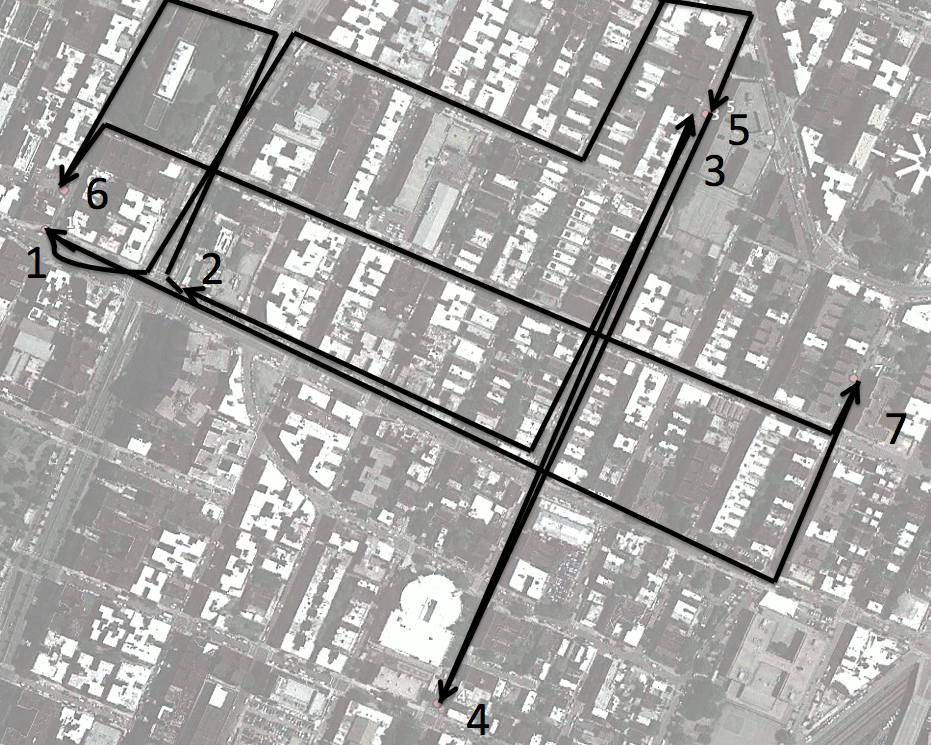}
     \label{fig:route_7nd_naive}
     }
     \subfigure[]{
     \includegraphics[width=.31\textwidth]{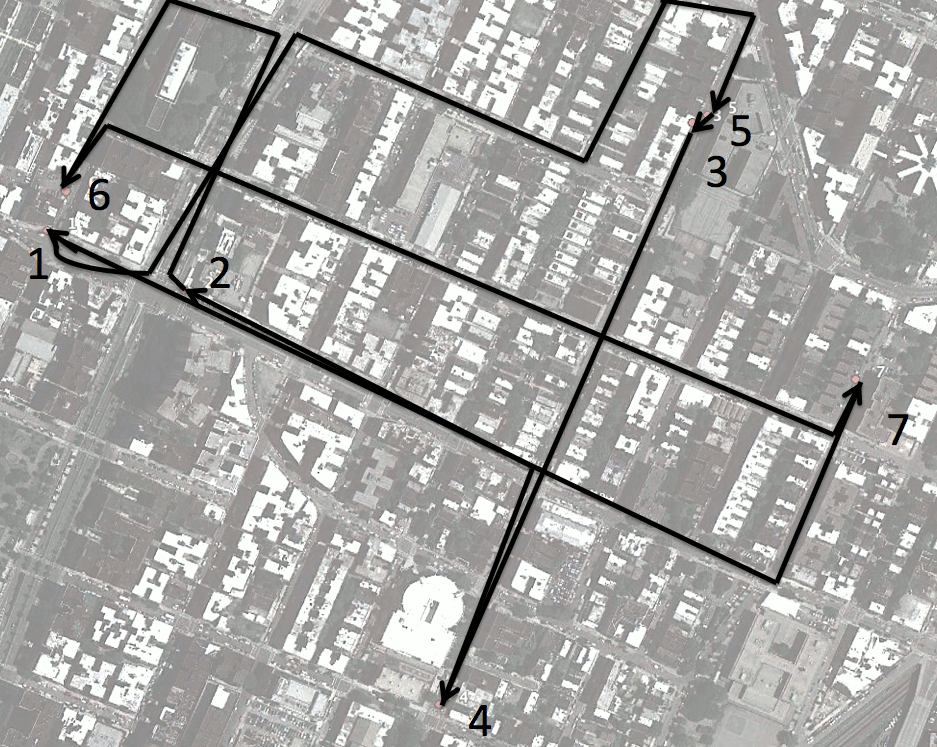}
     \label{fig:route_7nd_seq}
     }
     \subfigure[]{
     \includegraphics[width=.30\textwidth]{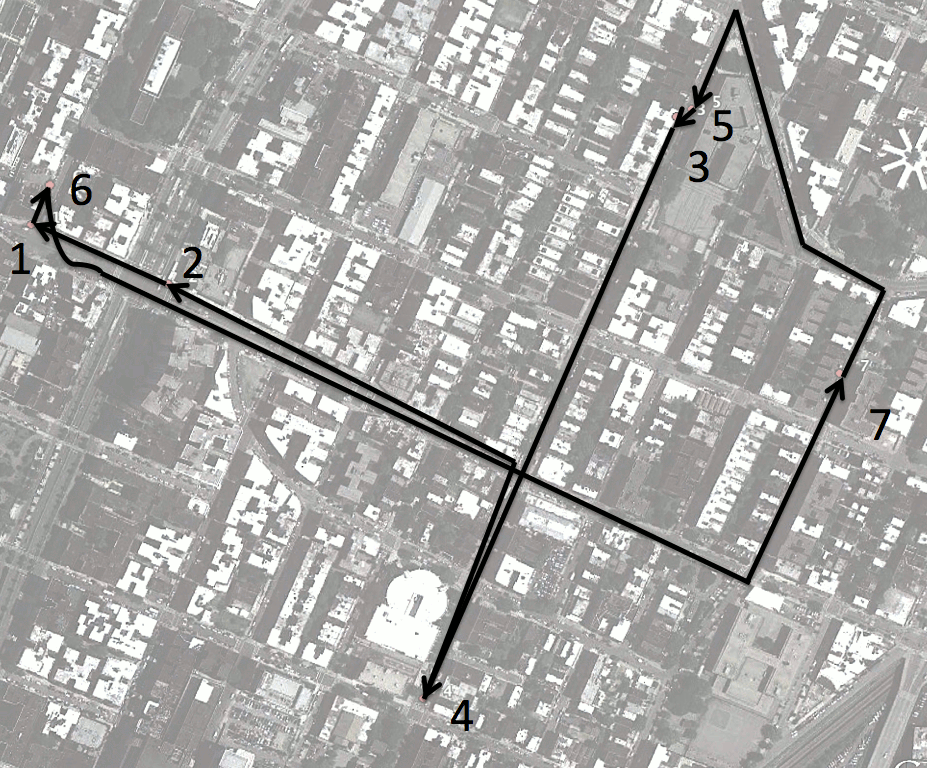}
     \label{fig:route_7nd_optimal}
     }
     \caption{Left: A na\"ive route: 1-5-4-3-2-6-7-1 obtained by sorting the probability estimates in decreasing order and visiting the corresponding nodes. Center: Sequential process route: 1-5-3-4-2-6-7-1. The simultaneous process also chooses this route when $C_1$ is small. Right: Route chosen by the simultaneous process when $C_1$ is larger: 1-6-7-5-3-4-2-1. Prediction performance is only slightly influenced by the route change, but the routing cost (Cost 1) decreases a lot.
      }
\end{figure}

\textcolor{black}{
\subsection{Performance of the simultaneous process across randomly generated decision problems}
In this experiment, we varied the size of the training data and characterized its effect on learning for both the sequential process and the simultaneous process. 
We expect to see that when the sample size is small, the operational cost regularization {can lead} to better performance for the simultaneous process  {for some $C_1$. That is, we are showing that some type of knowledge on the operational cost can be helpful in prediction.}  (When the sample size is large, the regularization term of the simultaneous process should not have much of an effect, and the sequential and simultaneous process models should perform similarly, which is unsurprisingly what we observe.)
}

\textcolor{black}{
To conduct the experiment, we considered training samples ranging from 10\% of the original training set size to 100\% of the original training set size. 
For each training set we generated, we then generated 100 seven
node decision problems (TRP problems) from a separate held out test set. Each decision problem was generated by randomly picking the nodes (whose labels are not known during training) and computing the distances between each pair of them. For each new training sample size and for each random decision problem, we solved the sequential process and the simultaneous process for both Cost 1 and Cost 2. 
}
\textcolor{black}{
In particular, this involved the following. 
\begin{itemize}
\item For the sequential process we performed a 5-fold cross validation to pick the coefficient for the $\ell_2$ regularization term. Once the optimal regularization constant was chosen, we computed the predicted probabilities of failure and solved the corresponding weighted TRP subproblem.
\item We solved the simultaneous process using the AM algorithm for 4 different $C_1$ values, and the one achieving the best test performance (on a separate held out test set) was reported.  {This encodes the notion that one of the $C_1$ values, namely the one which gives the best test performance, encodes the right prior knowledge.} In total, 8,000 mixed integer nonlinear programs were solved (4 $C_1$ value settings per decision problem (100) per training sample size (10) per decision cost type (Cost 1 and Cost 2)).
 \end{itemize} 
}

 \textcolor{black}{
Figure \ref{fig:test_performance_cost1} shows how the simultaneous process compares with respect to the sequential process in terms of AUC on a held out test set as the size of the training sample is varied for Cost 1. The x-axis shows different training sample sizes and the y-axis shows the difference between the AUC of a simultaneous process model (one for each training size and decision problem) and the AUC of the corresponding sequential process model, where 0 means that the AUC's for the two processes were identical. From the figures, we can infer the following:
\begin{itemize}
\item The test performance of the simultaneous process can often be better than that of the sequential process for smaller training sets. This is because at lower sample sizes, the simultaneous process gains an advantage from the prior knowledge about operational costs. 
\item At larger training set sizes, the logistic models from the simultaneous process and the sequential process performed similarly. Again this is not surprising, as the regularization becomes less influential as the training set size increases.
\end{itemize}
}

\begin{figure}
     \centering
     \includegraphics[width=.65\textwidth]{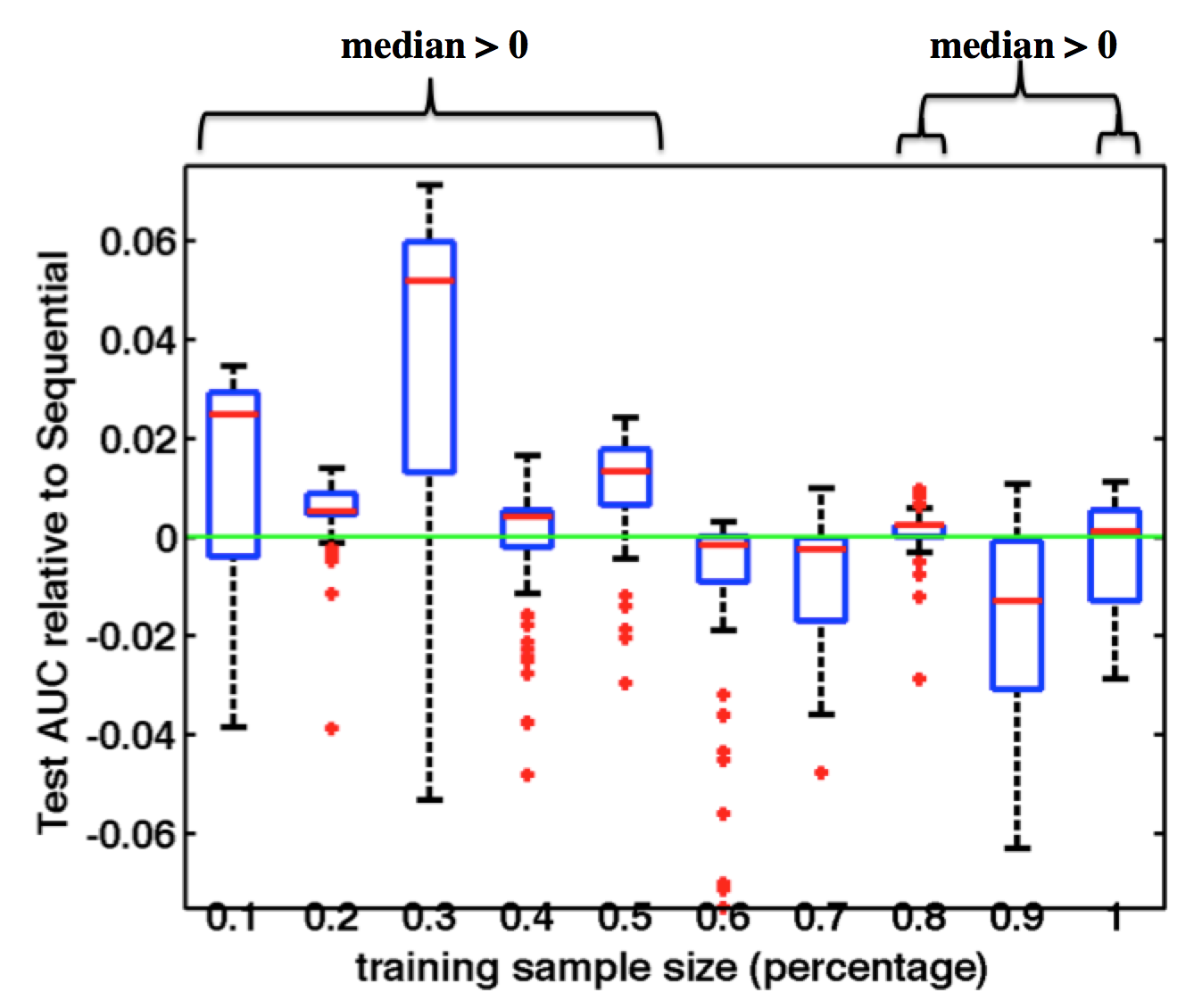}
     \caption{Performance of the two processes on randomly generated decision problems at various training sample sizes with Cost 1 as the routing cost. The evaluation is over a separate held out test set. The green solid line is the zero mark. For each size of the training sample on the x-axis (varying from 10\% to 100\% of the original training sample size), we solved the simultaneous process for 100 random seven node decision problems and the performances of the corresponding models relative to the sequential process models are plotted as a box-plot.     \label{fig:test_performance_cost1}
}
\end{figure}

\textcolor{black}{
At each training sample size, we tested two hypotheses using the (nonparametric) sign test, with significance level $\alpha=0.05$.
In the first test, the null hypothesis was that the median AUC performance of the two processes was the same versus the alternative that the median AUC performance of the simultaneous process is greater than the median AUC performance of the sequential process. For three of the larger training sample sizes (namely $.6, .7$ and $.9$ of the original), we could not reject the null as the corresponding p-values were greater than the significance level and for the remaining 7 training sample sizes, we could reject the null that the median performance of the two methods is the same. In the second test, the null hypothesis was that the median routing cost using the two processes was the same versus the alternative that the median routing cost of the simultaneous process is smaller than the median routing cost of the sequential process. Here, we were able to reject the null hypothesis for all 10 training sample sizes.
}

\textcolor{black}{We ran this experiment again with Cost 2 as the routing cost, and solved the same 100 decision problems for 4 different $C_1$ values for each of the 10 different training samples of different sizes. Figure \ref{fig:test_performance_cost2} summarizes the performance of these models. The inferences one can draw from this plot are similar to the previous case.}

\begin{figure}
     \centering
     \includegraphics[width=.65\textwidth]{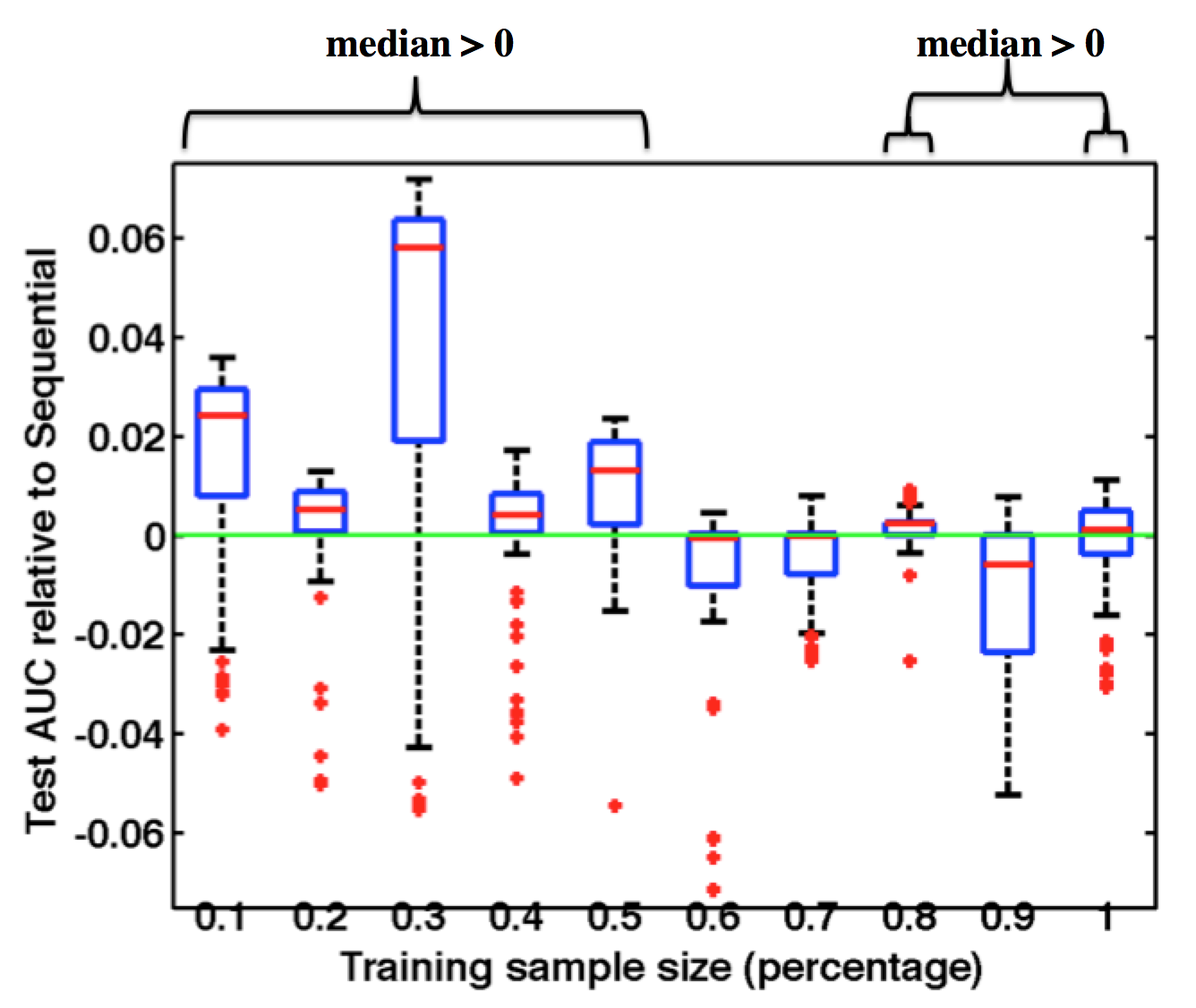}
     \label{fig:test_performance_cost2}
     \caption{Performance of the models output by the two processes on randomly generated decision problems at various training sample sizes with Cost 2 as the routing cost. The evaluation is on a separate held out test set. The green solid line is the zero mark. The box-plots at each training sample size represent the distribution of performances (relative AUC) of the models obtained by the simultaneous process.}
\end{figure}

\textcolor{black}{
\subsection{Scalability of MLOC for Routing}
In this experiment, we varied the size of the training sample and decision problem and characterized their effect on time to obtain a solution. All experiments were carried out in a cluster environment (128-256GB RAM, 16-32 core machines).
}

\textcolor{black}{
In the first case, we analyzed the effect of training sample size when the decision problem size was fixed to 7 nodes. In particular, we generated 100 seven node decision problems for each of the 10 training sample sizes (varying from 10\% to 100\% of the original) and solved the corresponding MINLPs using the AM method discussed in Section \ref{subsec:MINLPsolve}. As discussed before, a decision problem was created by randomly picking a set of seven nodes and computing the distances between them. Additionally, the $C_2$ parameter was set using 5-fold cross validation. A fixed value of $C_1$ was also chosen a-priori. Thus a total of 1000 MINLPs were solved for each Cost 1 and Cost 2. Figures \ref{fig:scaling_train_cost_type_1} and \ref{fig:scaling_train_cost_type_2} show the box plots for the time taken in seconds to solve each simultaneous process problem for Cost 1 and Cost 2 respectively.
From the figures, we can infer that as the training sample size increases, the time taken to solve the MINLP increases only mildly for both cost options. This is because the AM method can efficiently scale with the number of examples.
\begin{figure}
     \centering
     \subfigure[]{
     \includegraphics[width=.45\textwidth]{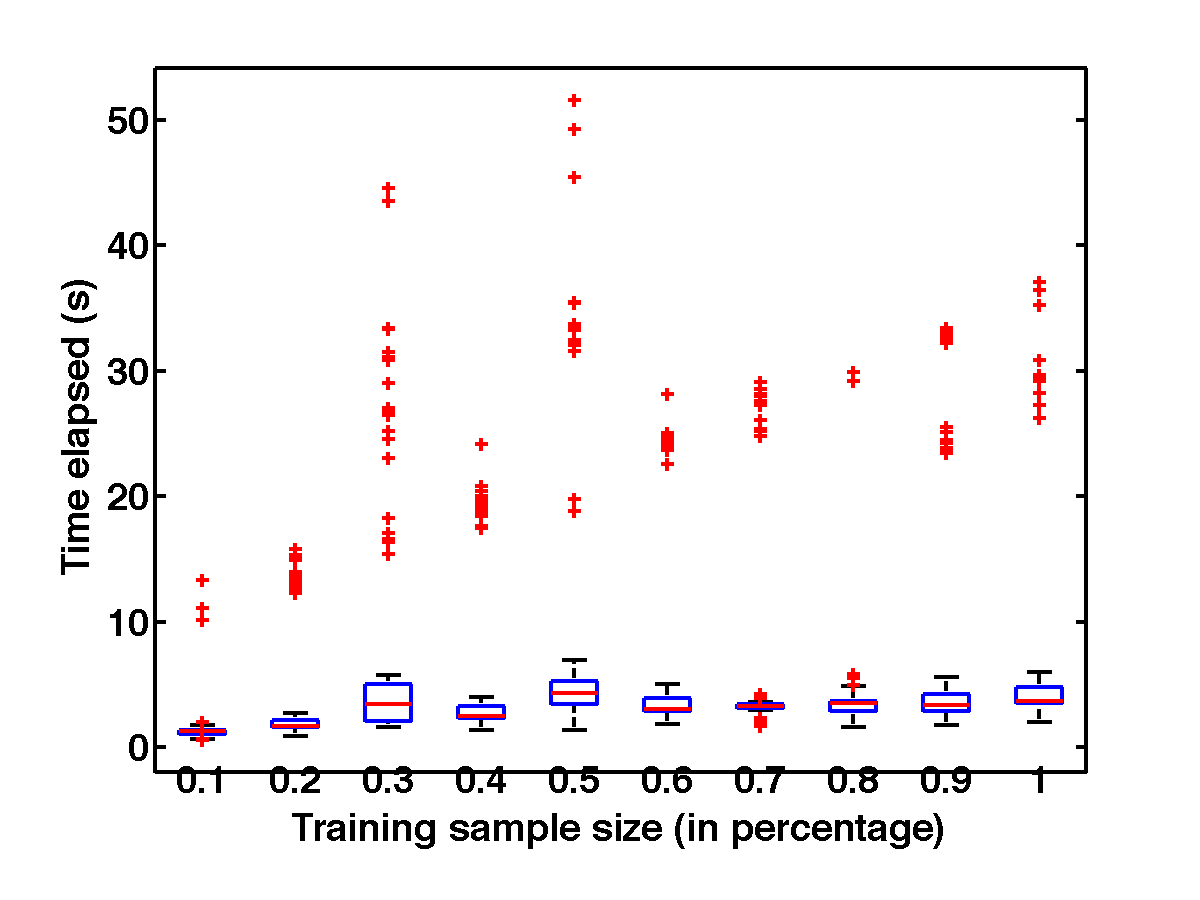}
      \label{fig:scaling_train_cost_type_1}
     }
     \subfigure[]{
     \includegraphics[width=.45\textwidth]{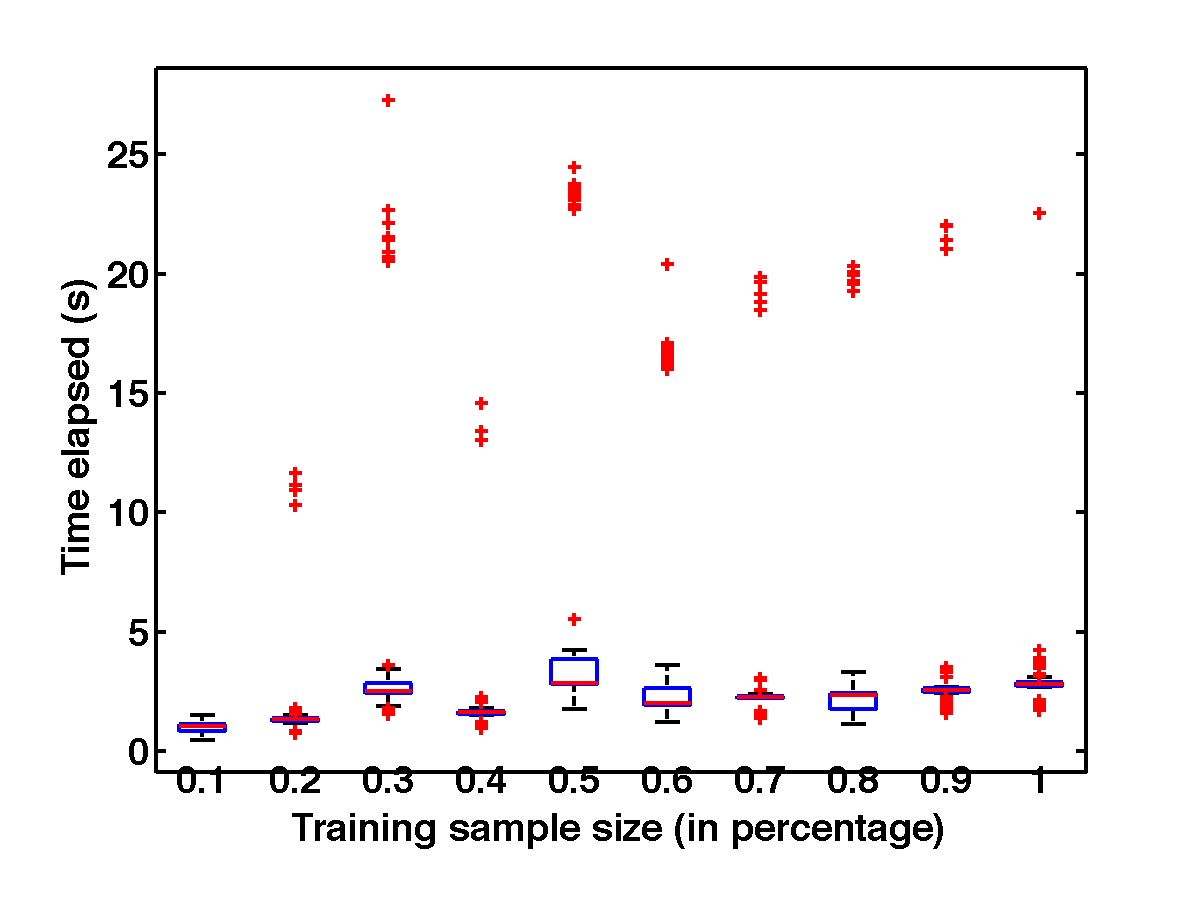}
      \label{fig:scaling_train_cost_type_2}
     }
     \caption{Left: Boxplot of times taken to solve randomly generated 7 node decision problems for various training sample sizes (from 10\% to 100\% of the original), when Cost 1 is used. For each training sample size, we solved the simultaneous process for 100 random decision problems and recorded the times. As shown, the time for solving the simultaneous process depends mildly on the size of the training sample size. Right: Boxplot of times taken to solve randomly generated 7 node decision problems for various training sample sizes when Cost 2 is used.}
\end{figure}
}

\textcolor{black}{
In the second case, we analyzed the effect of decision problem size. In particular, we generated 100 decision problems for node sizes $M= 7,8,9,10,11,12,13$ {and 10 decision problems for node size $M=15$. We} solved the MINLPs of Equations (\ref{eqn:learning1}) and (\ref{eqn:learning2}) using the AM method. Similar to the previous experiment, a decision problem of a given size was created by randomly picking a set of nodes and computing the distances between them. The $C_2$ parameter was set using 5-fold cross validation.
The MINLPs were then solved for a fixed value of $C_1$ chosen a-priori. Thus a total of 710 MINLPs were solved for each Cost 1 and Cost 2. Figures \ref{fig:scaling_cost_type_1} and \ref{fig:scaling_cost_type_2} show the box plots for the time taken {in seconds (in log scale) to solve each simultaneous process problem for Cost 1 and Cost 2 respectively.}
From the figures, we can infer that as the decision problem size ($M$ nodes) increases, the time taken to solve the MINLP increases exponentially for both cost models. As mentioned earlier, this is because TRP - and generally routing - problems are hard. One needs to solve the TRP anyway, regardless of whether the sequential or simultaneous process is used, to determine the route.
\begin{figure}
     \centering
     \subfigure[]{
     \includegraphics[width=.45\textwidth]{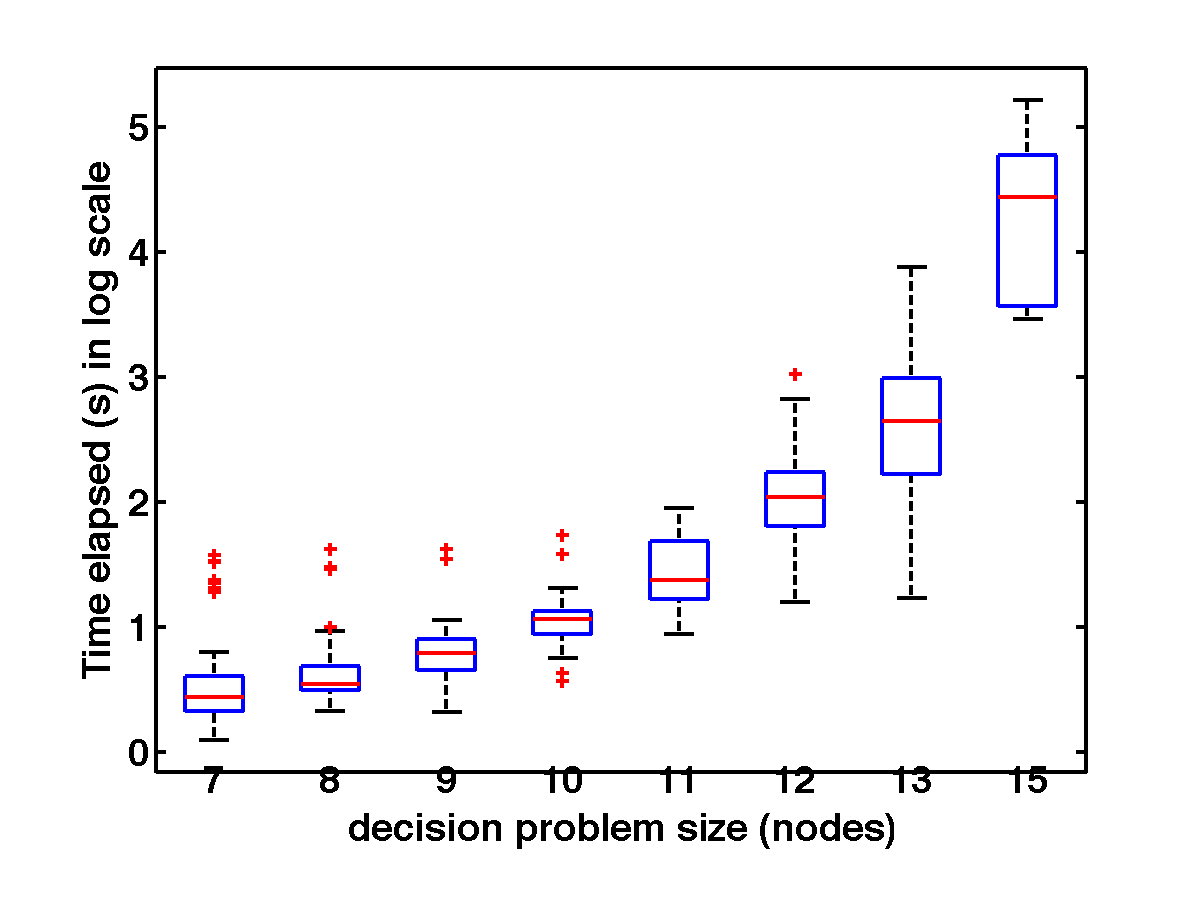}
      \label{fig:scaling_cost_type_1}
     }
     \subfigure[]{
     \includegraphics[width=.45\textwidth]{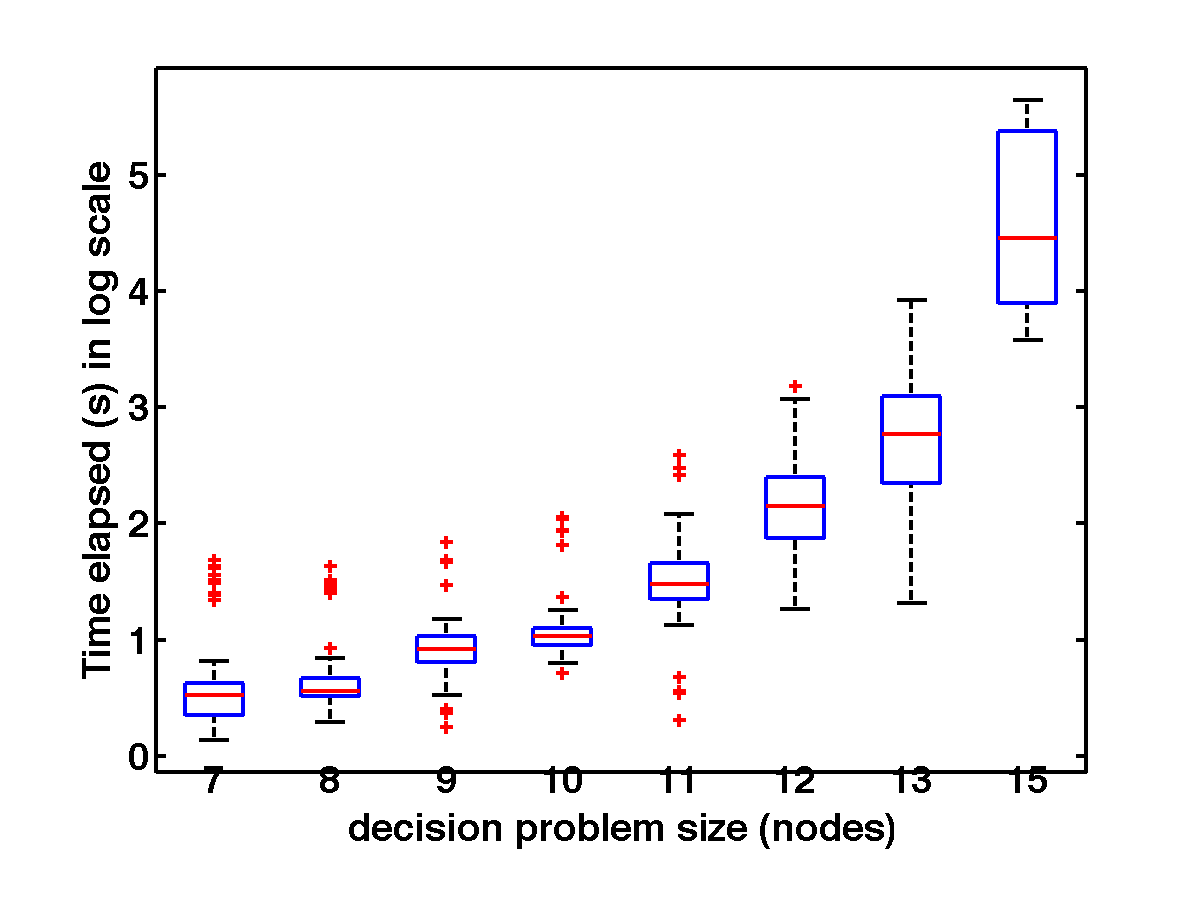}
      \label{fig:scaling_cost_type_2}
     }
     \caption{Left: Boxplot of times taken to solve the randomly generated decision problems for various values of $M$, the number of decision problem nodes, when Cost 1 is used. For each decision problem size (varying from 7 to 15), we solved the simultaneous process for 100 random decision problems (10 problems for the 15 node setting) and recorded the times. As shown, the time for solving the decision problem grows exponentially in the size of the decision/routing problem (since the trend is linear in log scale). Right: Boxplot of times taken to solve the randomly generated decision problems for various values of $M$ when Cost 2 is used.}
\end{figure}
}
\textcolor{black}{
\begin{remark} 
A note on the performance of other methods (Method 1 and Method 3): For a given $C_1$, the computation times to solve a typical problem with $\sim 23K$ examples in training and 6, 7, 8, or 10 nodes for the routing problem are about 30, 130, 140, 240 seconds respectively using Method 2 (NM). NM took $\sim 1000$ iterations to reach a solution where each iteration involved solving a weighted TRP subproblem within $\sim 2$ seconds. The computation times for solving the MINLP formulation given in (\ref{eqn:learning1}) directly (Method 1) for a given $C_{1}$ were $\sim 100$ times slower. Since the computation times for Method 2 (AM) were the best among the three, we used it  to benchmark scalability of MLOC for our application.
\end{remark}
}

\section{Generalization Bound}\label{sec:generalizationbound}
We initially introduced the failure cost regularization term in order to find scenarios where the data would support low-cost (more actionable) repair routes.
From a learning theoretic point of view, incorporating regularization reduces the size of the hypothesis space and may thus promote generalization. In our case, we can think of decision makers having prior knowledge about how much it should cost for an optimal routing solution. This information should constrain the size of the hypothesis space via the parameter $C_{1}$. Increasing $C_1$ may thus assist in predicting failure probabilities.
In what follows, we will provide a generalization bound for the 
MLOC framework, and specifically for the ML\&TRP.

We seek to bound the true risk   
$\Rtrue (f_{\lambda}):={E}_{(x,y)\sim\mu_{\mathcal{X}\times \mathcal{Y}}} l(f_{\lambda}(x),y)$ with empirical risk $\Rempirical(f_{\lambda},\{x_{i},y_{i}\}_1^m)= \frac{1}{m}\sum_{i=1}^m l(f_{\lambda}(x_i),y_i)$ plus a complexity term capturing the size of the hypothesis space. Here $l: f_{\lambda}(\mathcal{X})\times\mathcal{Y}\rightarrow \R$ is logistic loss, instance $(x,y)$ is drawn from an unknown distribution $\mu_{\mathcal{X}\times\mathcal{Y}}$ and the initial hypothesis space is $\mathcal{F}:= \{f_{\lambda}: f_{\lambda}(x) = \lambda \cdot x, \lambda \in \R^d, \|\lambda\|_{2} \leq B_{b}\}$. 

\textcolor{black}{
\subsection{Hypothesis sets for Cost 1 and Cost 2}}

Consider the ML\&TRP with Cost 1 in (\ref{eqn:learning1}).
The hypothesis space for the ML\&TRP is smaller than $\mathcal{F}$, since we have also the constraint on the failure cost.
Replacing the Lagrange multiplier $C_1$ with an explicit constraint on the failure cost (\ref{cost1def}), we have that for the ML\&TRP, $f_{\lambda}$ is subject to the failure cost constraint: 
$\min_{\pi} \sum_{i=1}^M p(\tilde{x}_{\pi(i)}) L_\pi(\pi(i)) \leq \Cbudget$, 
where $\Cbudget$ is inversely related to $C_1$, controlling a ``budget" for the failure cost. This gives us the restricted hypothesis space:
\begin{eqnarray*}\mathcal{F}_0 := \left\{f_{\lambda} : f_{\lambda}\in \mathcal{F},  \min_{\pi \in \Pi}\sum_{i=1}^M L_{\pi}({\pi(i)}) \frac{1}{1+ e^{-f_{\lambda}(\tilde{x}_{\pi(i)})}} \leq \Cbudget\right\}.
\end{eqnarray*}

\textcolor{black}{
Even though $\mathcal{F}_{0}$ is smaller than $\mathcal{F}$, it is difficult to construct a tight bound on its covering number. So we enlarge $\mathcal{F}_{0}$ just enough so that a bound on its covering number can be calculated. In particular, we will enlarge the set $\mathcal{F}_{0}$ to the set $\mathcal{F}_{2}$.
We define set $\mathcal{F}_2$ parametrized by a vector $\abudget\in\R^d$ as follows:
\[
\mathcal{F}_2 := \left\{f_{\lambda}: f_{\lambda} \in \mathcal{F}, \abudget \cdot \lambda \leq 1 \right\},
\]
 where vector $\abudget$ is a function of $\Cbudget$, the graph and the unlabeled data $\{\tilde{x}_{i}\}_{i}$.
 }
 
 \textcolor{black}{$\mathcal{F}_2$ is the intersection of the ball $\mathcal{F}$ with the halfspace defined by $\abudget$; it is a ball that is missing a spherical cap. 
The vector $\abudget$ will capture the effect of $\Cbudget$ in such a way that $\mathcal{F}_0\subset\mathcal{F}_2$, which we will show within the proof of the Theorem \ref{TheoremMain}. $\mathcal{F}_2$ is the space whose complexity we will bound, again within the proof of Theorem \ref{TheoremMain}.
} 

\textcolor{black}{We will now define the vector $\abudget$ in terms of $\Cbudget$ and provide a proof later.}
Let $d_i$ be the shortest distance from the starting node (node 1) to node $i$ for $i=2,..,M$ and $d_1$ be the length of the shortest tour that visits all the nodes and returns to node 1. This means $d_i\leq  L_{\pi}({i}); i=1,...,M$ with equality if the physical graph can be embedded into 1-dimensional Euclidean space. The vector $\abudget$ is then related to $\Cbudget$ defined elementwise as:
\begin{eqnarray}
\abudget^j = \frac{1}{\Cbudget-\azero}\left(\gprime \right)\left(\sum_id_i\tilde{x}^j_i\right) \textrm{ for } j=1,..,d	\label{eqn:abudget}\\
\textrm{where }\azero= \left(B_{b}X_{b} \;\gprime+\frac{1}{1+e^{B_{b}X_{b}}}\right)\sum_i d_i. \nonumber
\end{eqnarray}

\textcolor{black}{
\begin{remark} \textbf{(Defining $\mathcal{F}_{0}, \mathcal{F}_{2}$ and $\abudget$ for Cost 2)}: 
The definitions of $\mathcal{F}_{0}$ and $\mathcal{F}_{2}$ can be easily adapted to Cost 2 in (\ref{eqn:learning2}) of the ML\&TRP. Here too, the hypothesis space for the ML\&TRP is smaller than $\mathcal{F}$ because of the constraint on the failure cost. Again replacing the Lagrange multiplier $C_1$ with an explicit constraint on the failure cost, we have that for the ML\&TRP, $f_{\lambda}$ is subject to the failure cost constraint: 
$\min_{\pi} \sum_{i=1}^M \log(1+e^{\lambda \cdot \tilde{x}_{\pi(i)}}) L_\pi(\pi(i)) \leq \Cbudget$, 
where $\Cbudget$ is inversely related to $C_1$, controlling a ``budget" for the failure cost. This gives us the restricted hypothesis space:
\begin{eqnarray*}\mathcal{F}_0 := \{f_{\lambda} : f_{\lambda}\in \mathcal{F},  \min_{\pi \in \Pi}\sum_{i=1}^M L_{\pi}({\pi(i)}) \log(1+ e^{f_{\lambda}(\tilde{x}_{\pi(i)})}) \leq \Cbudget\}.
\end{eqnarray*}
We can again enlarge this class of functions just enough so that a bound on the covering number of $\mathcal{F}_0$ can be calculated. The enlarged set $\mathcal{F}_{2}$ will have the same form as for Cost 1 except for a different definition of $\abudget$ (we will derive this later):
\begin{eqnarray}
\abudget^j = \frac{1}{\Cbudget-\azero}\left(\gprimeCtwo \right)\left(\sum_id_i\tilde{x}^j_i\right) \textrm{ for } j=1,..,d	\label{eqn:abudget2}\\
\textrm{where }\azero= \left(B_{b}X_{b} \;\gprimeCtwo+\log(1+e^{-B_{b}X_{b}})\right)\sum_i d_i. \nonumber
\end{eqnarray}
Since Cost 2 can be handled in the same way as Cost 1, we will focus on Cost 1 for the rest of this section.
\end{remark}
}

\textcolor{black}{
\subsection{Main Generalization Result}
Recall that we would like to establish that generalization can depend on $\Cbudget$. The following theorem shows this explicitly. $\Cbudget$ enters the bound through the vector $\abudget$.
}
\begin{theorem}
\label{TheoremMain} (\textbf{Main Result})  Let $\mathcal{X} = \{x \in \mathbb{R}^{d}: \|x\|_{2} \leq X_{b}\}$, $\mathcal{Y}=\{-1,1\}$.  Let $\mathcal{F}_0$ be defined as above with respect to $\{\tilde{x}_i\}_{i=1}^M$, $\tilde{x}_i\in\mathcal{X}$ (not necessarily random) and a corresponding physical graph. Let $\{x_{i},y_{i}\}_{i=1}^{m}$ be a sequence of $m$ instances 
drawn independently according to unknown 
distribution
$\mu_{\mathcal{X}\times \mathcal{Y}}$ and 
$M_{\textrm{bound}}:= B_{b}X_{b} + \log 2$.
For any $\epsilon > 0$,
\begin{eqnarray*}
\lefteqn{P\Big(\exists f \in \mathcal{F}_0: |\Rempirical(f_{\lambda},\{x_{i},y_{i}\}_{1}^{m}) - \Rtrue(f_{\lambda})| > \epsilon\Big)}\\ &\leq& 4\alpha(d,\abudget(\Cbudget))\left(\frac{32B_{b}X_{b}}{\epsilon} +1\right)^{d} \exp\left(\frac{-m\epsilon ^2}{128M_{\textrm{bound}}^2}\right),
\end{eqnarray*}
where $\alpha(d,\abudget(\Cbudget))$ is equal to
\begin{eqnarray}
\alphaa \label{alphab}\\ 
\textrm{or equivalently, } \alphab \label{alphaa}
\end{eqnarray}
and where $_{2}F_{1}(a,b;c;d)$ and $I_{x}(a,b)$ are the hypergeometric function and the regularized incomplete beta functions respectively.
\end{theorem}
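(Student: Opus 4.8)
The plan is to prove the bound by a covering-number argument combined with Hoeffding's inequality and a union bound; the only nonstandard ingredient is that the graph-traversal constraint confines $\mathcal{F}_0$ to a spherical cap of the parameter ball, and the normalized volume of that cap is exactly the factor $\alpha(d,C_g,c)$. \textbf{Step 1 (linearize the constraint).} I would first turn the nonlinear, combinatorial constraint defining $\mathcal{F}_0$ into a single linear half-space constraint on $\lambda$. Because $d_i \le L_\pi(i)$ for every permutation $\pi$, dropping the $\min_\pi$ gives $\sum_i d_i\, p(\tilde x_i) \le \min_{\pi}\sum_i L_\pi(i)\, p(\tilde x_i)\le C_g$ for every $f\in\mathcal{F}_0$. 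Next, since the sigmoid $g(u)=1/(1+e^{-u})$ has derivative minimized at the symmetric endpoints of $[-M_1M_2,M_1M_2]$ with value $\gprime$, the affine map $u\mapsto \gprime\,u+(M_1M_2\,\gprime+\tfrac{1}{1+e^{M_1M_2}})$ agrees with $g$ at $u=-M_1M_2$ and has slope at most $g'(u)$ throughout, so it lies below $g$ on the whole interval. Substituting $u=\lambda\cdot\tilde x_i$, multiplying by $d_i$ and summing converts $\sum_i d_i\, p(\tilde x_i)\le C_g$ into $\lambda\cdot\tilde c+\tilde c_0\le C_g$, that is $c\cdot\lambda\le 1$ with $c=\tilde c/(C_g-\tilde c_0)$, exactly the vector defined before the theorem. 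Hence $\mathcal{F}_0$ corresponds to a subset of $\{\lambda:\|\lambda\|_2\le M_1,\ c\cdot\lambda\le 1\}$, the ball of radius $M_1$ with the cap $\{c\cdot\lambda>1\}$ removed.

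\textbf{Step 2 (covering in $\lambda$-space).} The logistic loss is $1$-Lipschitz in its argument and $f_\lambda(x)=\lambda\cdot x$ is $M_2$-Lipschitz in $\lambda$ on $\mathcal{X}$, so $\|\lambda-\lambda'\|_2\le\rho$ forces $|l_{f_\lambda}(x,y)-l_{f_{\lambda'}}(x,y)|\le M_2\rho$ uniformly, and thus both $R(f_\lambda)$ and its empirical version change by at most $M_2\rho$. Taking the covering radius $\rho=\epsilon/(32M_2)$ controls this discretization slack, and the covering number of the full ball $B(0,M_1)$ at scale $\rho$ is at most $(M_1/\rho+1)^d=(32M_1M_2/\epsilon+1)^d$, producing the stated polynomial factor.

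\textbf{Step 3 (the cap fraction).} This is the crux. I need only cover the part of the ball meeting $\mathcal{F}_0$, i.e. the $\rho$-neighborhood of $\{\|\lambda\|_2\le M_1,\ c\cdot\lambda\le 1\}$, which is contained in the ball of radius $R=M_1+\rho$ intersected with the half-space $c\cdot\lambda\le 1+\rho\|c\|_2$, whose bounding hyperplane lies at distance $h=\|c\|_2^{-1}+\rho$ from the origin. A volume-ratio (packing) comparison shows that the covering number of this region is at most $\alpha$ times that of the full ball, where $\alpha$ is the fraction of the radius-$R$ ball lying on the near side of a hyperplane at distance $h$; equivalently, $\alpha=1$ minus the volume fraction of the opposite cap. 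With $\rho=\epsilon/(32M_2)$ this gives $h=\|c\|_2^{-1}+\epsilon/(32M_2)$ and $R=M_1+\epsilon/(32M_2)$. The closed form for a $d$-dimensional cap fraction is $\tfrac{1}{2}\,I_{1-(h/R)^2}(\tfrac{d+1}{2},\tfrac{1}{2})$, so $\alpha=1-\tfrac{1}{2}I_{1-(h/R)^2}(\tfrac{d+1}{2},\tfrac{1}{2})$, which is exactly (\ref{alphaa}); rewriting the regularized incomplete beta function as a Gauss hypergeometric series gives the equivalent form (\ref{alphab}). I expect this geometric step --- deriving the cap volume in closed form and correctly tracking the $\rho$-inflation so that $h$ and $R$ acquire the $\epsilon/(32M_2)$ corrections --- to be the main obstacle.

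\textbf{Step 4 (concentration and union bound).} For each of the at most $\alpha\,(32M_1M_2/\epsilon+1)^d$ covering centers $\lambda_k$, the empirical risk is an average of $m$ i.i.d. terms whose loss values lie in an interval of length at most $2M_1M_2$ (since $|f_\lambda|\le M_1M_2$ and $u\mapsto\ln(1+e^u)$ is $1$-Lipschitz), so Hoeffding's inequality bounds the probability that $|R(f_{\lambda_k},\{x_i,y_i\}_1^m)-R(f_{\lambda_k})|$ exceeds a suitable fraction of $\epsilon$ by $2\exp(-m\epsilon^2/(512(M_1M_2)^2))$. Allocating $\epsilon$ between the discretization slack of Step 2 and the per-center deviation, taking a union bound over the centers counted in Step 3, and absorbing the two-sided Hoeffding factor and the packing-to-covering conversion into the leading constant $4$, yields the claimed inequality.
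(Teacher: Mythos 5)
Your proposal is correct in substance, and its geometric core coincides with the paper's argument: your Step 1 is exactly Lemma \ref{LemmaRelaxationCovering} (same replacement of latencies $L_\pi(i)$ by shortest distances $d_i$, same tangent line to the sigmoid at $-M_1M_2$, same vector $c$), and your Step 3 reproduces Theorem \ref{TheoremVolCovering} plus the spherical-cap formula of Lemma \ref{LemmaSphericalCap}. Where you genuinely diverge is the probabilistic backbone. The paper does not run a pointwise Hoeffding-plus-union-bound over a fixed parameter net; it invokes Pollard's uniform convergence bound (Theorem \ref{TheoremPollard}), which gives $4\,E[N(\epsilon/16, l_{\mathcal{F}}, \|\cdot\|_{L_1(\mu^m)})]\exp(-m\epsilon^2/(128 M_{\textrm{bound}}^2))$, and then chains empirical $L_1$ covers of the loss class down to $\ell_2$ covers of $B_{M_1}\cap H_{\|c\|_2^{-1}}$ via Lemmas \ref{LemmaLossClassFunctionClass}, \ref{LemmaL1L2Covering} and \ref{LemmaL2tol2}; this is where the specific constants $4$, $32=16\times 2$, and $512=128\times 4$ (with $M_{\textrm{bound}}=2M_1M_2$) come from. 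Your shortcut is legitimate here precisely because the net lives in $\lambda$-space and the loss is uniformly ($\sup$-norm) $M_2$-Lipschitz in $\lambda$, so no symmetrization or data-dependent covering is needed, and it would in fact yield a sharper exponential term. Two bookkeeping slips should be fixed before claiming the exact stated constants: (i) the covering number of $B_{M_1}$ at radius $\rho$ is $(2M_1/\rho+1)^d$, not $(M_1/\rho+1)^d$ (the packing argument only makes the $\rho/2$-balls disjoint), so your choice $\rho=\epsilon/(32M_2)$ gives $(64M_1M_2/\epsilon+1)^d$; taking $\rho=\epsilon/(16M_2)$ recovers $(32M_1M_2/\epsilon+1)^d$, and the cap then inflates by $\rho/2=\epsilon/(32M_2)$, which is exactly how the $\epsilon/(32M_2)$ corrections inside $\alpha$ arise in the paper; (ii) the claim that the leftover factors can be ``absorbed into the constant $4$'' should be verified explicitly, though the large slack in your Hoeffding exponent relative to $512(M_1M_2)^2$ makes this routine. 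Neither issue is a conceptual gap.
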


\textcolor{black}{
The term $\alpha(d,\abudget(\Cbudget))$ comes directly from formulae for the volume of spherical caps.}
As $\Cbudget$ decreases, the norm $\|\abudget\|_2$ increases, and thus $\|\abudget\|_2^{-1}$ decreases, (\ref{alphab}) and (\ref{alphaa}) decrease, and the whole bound decreases. This is the mechanism by which decreasing $\Cbudget$ may improve generalization ability. 

\textcolor{black}{
Theorem \ref{TheoremMain} is specific to the ML\&TRP because $\mathcal{F}_{0}$ was defined based on the ML\&TRP and $\abudget$ was defined in (\ref{eqn:abudget}) for Cost 1 and (\ref{eqn:abudget2}) for Cost 2. 
}

\textcolor{black}{
The technique of Theorem \ref{TheoremMain} applies much more broadly than the ML\&TRP. In fact, we can derive a general bound that applies to any problem with a similar hypothesis space constraint. Specifically, the hypothesis space should be bounded by the intersection of a ball with a half-space.
}

\begin{corollary}(\textbf{Bound for General MLOC Framework}) Consider any operational cost constraint such that the hypothesis space lies within $\mathcal{F}_{2}$ defined by $\mathcal{F}_{2} = \{f_{\lambda} \in \mathcal{F}: \abudget \cdot \lambda \leq 1\}$ for some $\abudget \in \mathbb{R}^{d}$. Then, for any $\epsilon > 0$,
\begin{eqnarray*}
\lefteqn{P\Big(\exists f \in \mathcal{F}_{2}: |\Rempirical(f_{\lambda},\{x_{i},y_{i}\}_{1}^{m}) - \Rtrue(f_{\lambda})| > \epsilon\Big)}\\ &\leq& 4\alpha(d,\abudget)\left(\frac{32B_{b}X_{b}}{\epsilon} +1\right)^{d} \exp\left(\frac{-m\epsilon ^2}{128M_{\textrm{bound}}^2}\right),
\end{eqnarray*}
\textcolor{black}{
where $\alpha(d,\abudget)$ equals
\begin{eqnarray*}
\alphaa \\ 
\textrm{or equivalently, } \alphab 
\end{eqnarray*}
and where $_{2}F_{1}(a,b;c;d)$ and $I_{x}(a,b)$ are the hypergeometric function and the regularized incomplete beta functions respectively.
}
\label{CorollaryMain}
\end{corollary}

\textcolor{black}{
The $\alpha(d,\abudget)$ is influenced by our belief on the operational cost. Thus, by being able to specify something about the operational cost, we are able to have a better guarantee on generalization. In the case where we are not able to specify anything about the operational cost, the quantity $\alpha(d,\abudget)$ is equal to $1$ giving us the standard generalization result for norm constrained linear function classes.
}

\subsection{Proof}

\textcolor{black}{
The proof outline is as follows. We will construct two classes, $\mathcal{F}_1$ and $\mathcal{F}_2$ that are slightly larger than $\mathcal{F}_0$, but smaller than $\mathcal{F}$ when $\Cbudget$ is small enough. Then we will use a volumetric argument to bound the covering number of $\mathcal{F}_2$, which uses the volumes of spherical caps; the idea is to show that the value of $\Cbudget$ affects the volume of the hypothesis space, and thus the covering number.  The covering number bound is then applied to a uniform bound of \cite{pollard84} to obtain a generalization bound. 
The fact that the covering number of  $\mathcal{F}_2$ can be below that of $\mathcal{F}$ indicates that using functions from $\mathcal{F}_2$ may provide improvements in generalization over using the full set $\mathcal{F}$.
}

Let us lead up to the proof of Theorem \ref{TheoremMain}.
\begin{definition} Let $A \subseteq X$ be an arbitrary set and $(X, \dist)$ a (pseudo) metric space. Let $|\cdot|$ denote set size.
\begin{itemize}
\item For any $\epsilon > 0$, an \underline{$\epsilon$-cover} for $A$ is a finite set $U \subseteq X$ (not necessarily $ \subseteq A$) s.t. $ \forall x \in A, \exists u \in U$ with $\dist(x, u) \leq \epsilon$.
\item $A$ is totally bounded if $A$ has a finite $\epsilon$-cover for all $\epsilon > 0$. The \underline{\textit{covering number}} of $A$ is $N(\epsilon,A,\dist) := \inf_{U \in \mathcal{U}} |U|$ where $\mathcal{U}$ is the set of all $\epsilon$-covers for $A$. 
\item A set $R \subseteq X$ is $\epsilon$-separated if $\forall x,y \in R, \dist(x,y) > \epsilon$.  The \underline{\textit{packing number}} $M(\epsilon,A,\dist) := \sup_{R \in \mathcal{R}} |R|$, where $\mathcal{R}$ is the set of all $\epsilon$-separated subsets of $A$.
\end{itemize}
\end{definition}

\textcolor{black}{Consider Cost 1}.
Since, for any collection of values $p(\tilde{x}_i)\geq 0, \sum_i d_i p(\tilde{x}_i)\leq \sum_i L_{\pi}(i) p(\tilde{x}_i) \leq \Cbudget$, the class of functions which obey the constraint $\sum_i d_i p(\tilde{x}_i)\leq \Cbudget$ is larger than the class obeying $\sum_i L_{\pi}(i) p(\tilde{x}_i) \leq \Cbudget$. That is, $\mathcal{F}_0\subseteq \mathcal{F}_1$ where 
\[\mathcal{F}_1 := \left\{f_{\lambda}: f_{\lambda} \in \mathcal{F}, \sum_{i=1}^{M}d_i \frac{1}{1+ e^{-f_{\lambda}(\tilde{x}_{i})}} \leq \Cbudget \right\}.\]
As long as $\Cbudget \leq \sum_{i=1}^{M}d_{i} $, the constraint in $\mathcal{F}_1$ is not vacuous. The choice of the vector $\abudget$ ensures that $\mathcal{F}_1$ is a subset of $\mathcal{F}_2$ as we will prove below.

\begin{lemma}
\label{LemmaRelaxationCovering}  (\textbf{$\mathcal{F}_{0}$ is contained in $\mathcal{F}_{2}$})
\[N(\epsilon, \mathcal{F}_0,\|\cdot\|_{L_{2}(\mu_{\mathcal{X}}^{m})}) \leq N(\epsilon, \mathcal{F}_1,\|\cdot\|_{L_{2}(\mu_{\mathcal{X}}^{m})}) \leq N(\epsilon, \mathcal{F}_2,\|\cdot\|_{L_{2}(\mu_{\mathcal{X}}^{m})}).\]
\end{lemma}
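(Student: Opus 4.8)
The plan is to reduce both inequalities to the set inclusions $\mathcal{F}_0 \subseteq \mathcal{F}_1 \subseteq \mathcal{F}_2$ together with the elementary monotonicity of covering numbers under inclusion. Concretely, if $A \subseteq B$ then any $\epsilon$-cover $U$ of $B$ (which, by the definition given above, need not be contained in $B$) is automatically an $\epsilon$-cover of $A$, since every point of $A$ is a point of $B$ and is therefore within $\epsilon$ of some $u \in U$; taking $U$ to be a minimal cover of $B$ gives $N(\epsilon,A,\dist) \leq N(\epsilon,B,\dist)$. The first inclusion $\mathcal{F}_0\subseteq\mathcal{F}_1$ has already been recorded above (it follows from $d_i \leq L_{\pi}(i)$ and $p(\tilde{x}_i)\geq 0$), so it immediately yields the left-hand inequality. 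All the real work is therefore in establishing $\mathcal{F}_1 \subseteq \mathcal{F}_2$, i.e.\ that the nonlinear constraint defining $\mathcal{F}_1$ forces the linear constraint $c\cdot\lambda\le 1$ defining $\mathcal{F}_2$.

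My approach to $\mathcal{F}_1\subseteq\mathcal{F}_2$ is to replace the logistic function by an affine minorant. Write $\sigma(u)=1/(1+e^{-u})$ and $a:=M_1M_2$. For any $f_{\lambda}\in\mathcal{F}$, Cauchy--Schwarz gives $|\lambda\cdot\tilde{x}_i|\le\|\lambda\|_2\|\tilde{x}_i\|_2\le M_1M_2=a$, so every argument $u=\lambda\cdot\tilde{x}_i$ lies in $[-a,a]$. The affine function I would use is
\[
\ell(u) := \gprime\, u + \Big(M_1M_2\,\gprime + \tfrac{1}{1+e^{M_1M_2}}\Big),
\]
which one recognizes as the tangent line to $\sigma$ at the \emph{left} endpoint $u=-a$: indeed $\gprime=\sigma'(a)=\sigma'(-a)$ and $\tfrac{1}{1+e^{a}}=\sigma(-a)$, so $\ell(u)=\sigma(-a)+\sigma'(-a)(u+a)$. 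Granting that $\sigma(u)\ge\ell(u)$ on $[-a,a]$, I would apply this at each $u=\lambda\cdot\tilde{x}_i$ and sum against the nonnegative weights $d_i$ to obtain
\[
\sum_{i=1}^M d_i\,\sigma(\lambda\cdot\tilde{x}_i)\;\ge\;\gprime\,\lambda\cdot\Big(\sum_i d_i\tilde{x}_i\Big)+\tilde{c}_0,
\]
whose intercept is exactly $\tilde{c}_0$. For $f_{\lambda}\in\mathcal{F}_1$ the left side is $\le C_g$, hence $\gprime\,\lambda\cdot(\sum_i d_i\tilde{x}_i)\le C_g-\tilde{c}_0$; dividing by $C_g-\tilde{c}_0>0$ and recalling $c^j=\tilde{c}^j/(C_g-\tilde{c}_0)$ with $\tilde{c}^j=\gprime\sum_i d_i\tilde{x}_i^j$ gives $c\cdot\lambda\le 1$, i.e.\ $f_{\lambda}\in\mathcal{F}_2$.

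The step I expect to be the main obstacle is verifying that this tangent line is a genuine minorant across the \emph{entire} interval $[-a,a]$, not merely locally near $-a$. The logistic function has a single inflection at $u=0$, being convex on $[-a,0]$ and concave on $[0,a]$. On the convex part the tangent at $-a$ lies below $\sigma$ by the first-order convexity inequality, so $g(u):=\sigma(u)-\ell(u)\ge 0$ there, with $g(0)>0$. On the concave part $g$ is concave, hence dominates the chord through its endpoint values $g(0)$ and $g(a)$, so it suffices to check $g(a)\ge 0$; substituting $s=\sigma(a)$ and $s'=\sigma'(a)=s(1-s)$, this reduces to the scalar inequality $\tanh(a/2)\ge\tfrac{a}{2}\,\mathrm{sech}^2(a/2)$, equivalently to the monotone decrease of $t\mapsto\tanh(t)/t$ on $(0,\infty)$. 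I would also state explicitly the standing requirement $C_g>\tilde{c}_0$, so that the division preserves the inequality and $c$ points in the intended direction; this is the analogue of the non-vacuousness condition $C_g\le\sum_i d_i$ already imposed for $\mathcal{F}_1$. With $\mathcal{F}_0\subseteq\mathcal{F}_1\subseteq\mathcal{F}_2$ established, the monotonicity principle from the first paragraph delivers the full chain of covering-number inequalities.
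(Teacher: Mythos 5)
Your proof is correct and follows essentially the same route as the paper's: reduce the covering-number inequalities to the inclusions $\mathcal{F}_0\subseteq\mathcal{F}_1\subseteq\mathcal{F}_2$, obtain the second inclusion by minorizing the logistic function on $[-M_1M_2,M_1M_2]$ with its tangent line at the left endpoint, and sum against the nonnegative weights $d_i$ to turn the nonlinear constraint into the linear one $c\cdot\lambda\le 1$. You in fact go slightly further than the paper, which merely asserts that this tangent line is a global minorant on the whole interval; your convexity/concavity split and the reduction to the monotone decrease of $\tanh(t)/t$ supply the verification the paper leaves implicit, as does your explicit remark that $C_g>\tilde{c}_0$ is needed for the final division.
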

\begin{proof} It is sufficient to show $\mathcal{F}_{0} \subseteq \mathcal{F}_{1} \subseteq \mathcal{F}_{2}$.
The first inequality was discussed earlier;
since $d_{i} = \inf_{\pi \in \Pi}L_{\pi}({i})$, this implies:
\begin{eqnarray*}
\sum_{i=1}^{M}d_{i}p(\tilde{x}_{i}) \leq \sum_{i=1}^{M}L_{\pi}({i})p(\tilde{x}_{i}) \leq \Cbudget \Rightarrow \mathcal{F}_{0} \subseteq \mathcal{F}_{1}.
\end{eqnarray*}

We now show $\mathcal{F}_{1} \subseteq \mathcal{F}_{2}$. We first lower bound $p(\tilde{x}_i)$ by a line with slope $m_{1} := \gprime$ and intercept $m_{0} := B_{b}X_{b} \gprime +\frac{1}{1+e^{B_{b}X_{b}}}$ such that $m_{1}f_{\lambda}(\tilde{x}_i)+m_0\leq p(\tilde{x}_i)$ within the function range $[-B_{b} X_{b},B_{b} X_{b}]$.

This leads to the definition of $\abudget$ as we show now:
\begin{eqnarray}
\label{ctilde}
 &\sum_i d_i p(\tilde{x}_i)\geq \sum_i d_i (m_{1} (\lambda \cdot \tilde{x}_i)+m_0)
 = \tilde{a} \cdot \lambda+\azero,\\
\textrm{where } & \tilde{a}^j:=m_{1}\left(\sum_id_i\tilde{x}^j_i\right)= \frac{e^{B_{b}X_{b}}}{(1+e^{B_{b}X_{b}})^2}\left(\sum_id_i\tilde{x}^j_i\right) \textrm{ for } j=1,...,d
\label{eqn:atilde}\\
\textrm{and } & \azero=m_0\sum_i d_i = \left(B_{b}X_{b} \;\frac{e^{B_{b}X_{b}}}{(1+e^{B_{b}X_{b}})^2}+\frac{1}{1+e^{B_{b}X_{b}}}\right)\sum_i d_i.\nonumber \\
\textrm{Thus } & \forall \lambda\in\mathcal{F}_1, \tilde{a}\cdot \lambda+\azero 	\leq		   \sum_{i=1}^{M}d_{i}p(\tilde{x}_{i}) 				\leq \Cbudget,
\end{eqnarray}
which implies $\tilde{a}\cdot \lambda \leq \Cbudget - \azero$ or equivalently, $\frac{1}{\Cbudget - \azero}\tilde{a}\cdot \lambda 	\leq 1$.

This allows us to define $\abudget$ using (\ref{eqn:atilde}) as 
\[
\abudget^j = \frac{1}{\Cbudget-\azero}\left(\gprime \right)\left(\sum_id_i\tilde{x}^j_i\right) \textrm{ for } j=1,..,d,
\]
which is the same as (\ref{eqn:abudget}). This vector is such that the set $\mathcal{F}_{2}$ is larger than $\mathcal{F}_1$.
\qed
\end{proof}
\textcolor{black}{
\begin{remark} \textbf{(Deriving $\abudget$ for Cost 2)}: 
The above lemma can be adapted to Cost 2 to give the corresponding $\abudget$ that we had defined earlier. In particular, for any collection of values $\log(1+e^{\lambda \cdot \tilde{x}_i})\geq 0$ for all $i$, 
\[
\sum_i d_i \log(1+e^{\lambda \cdot \tilde{x}_i}) \leq \sum_i L_{\pi}(i) \log(1+e^{\lambda \cdot \tilde{x}_i}).
\]
Thus the class of functions that obey the constraint $\sum_i d_i \log(1+e^{\lambda \cdot \tilde{x}_i}) \leq \Cbudget$ is larger than the class obeying $\sum_i L_{\pi}(i) \log(1+e^{\lambda \cdot \tilde{x}_i}) \leq \Cbudget$, which is $\mathcal{F}_{0}$. $\mathcal{F}_{1}$ will be the set corresponding to the former constraint: 
\[
\mathcal{F}_{1} := \left\{f_{\lambda}\in \mathcal{F}:  \sum_{i=1}^M d_{i} \log(1+ e^{\lambda \cdot \tilde{x}_{i}}) \leq \Cbudget\right\}.
\]
 We now define $\mathcal{F}_{2}$ and $\abudget$ as follows.
We can also see that $\log(1+e^{\lambda \cdot \tilde{x}_i})$ can be lower bounded by a line with slope $m_{1} := \gprimeCtwo$ and intercept $m_{0}:= B_{b}X_{b} \gprimeCtwo + \log(1+e^{-B_{b}X_{b}})$ in the function range $[-B_{b} X_{b},B_{b} X_{b}]$ giving us the definition of $\abudget$ for Cost 2 as follows:
\begin{eqnarray*}
\Cbudget \geq &\sum_i d_i \log(1+e^{\lambda \cdot \tilde{x}_i})\geq \sum_i d_i (m_{1} (\lambda \cdot \tilde{x}_i)+m_0)
 = \tilde{a} \cdot \lambda+\azero,\\
\textrm{where} & \tilde{a}^j:=m_{1}\left(\sum_id_i\tilde{x}^j_i\right)= \gprimeCtwo\left(\sum_id_i\tilde{x}^j_i\right) \textrm{ for } j=1,...,d\\
\textrm{and } & \azero=m_0\sum_i d_i = \left(B_{b}X_{b} \;\gprimeCtwo+ \log(1+e^{-B_{b}X_{b}})\right)\sum_i d_i.\nonumber
\end{eqnarray*}
Thus,  $\frac{1}{\Cbudget - \azero}\tilde{a}\cdot \lambda 	\leq 1$, and since we wanted to have $\abudget\cdot\lambda \leq 1$ we define $\abudget$ element-wise as:
\[
\abudget^j = \frac{1}{\Cbudget-\azero}\left(\gprimeCtwo \right)\left(\sum_id_i\tilde{x}^j_i\right) \textrm{ for } j=1,..,d.
\]
Note that we have produced two $\abudget$ vectors for each of the two costs: Cost 1 and Cost 2 above. 
\end{remark}
}

Let $B(0,B_{b}) := \{\lambda: \lambda \in \R^{d}, \|\lambda\|_{2} \leq B_{b}\}$. Let the half space corresponding to $\mathcal{F}_2$ be $H_{\|\abudget\|_{2}^{-1}} := \{\lambda: \abudget \cdot \lambda \leq 1\}$.  The lemma below relates covering numbers of $\mathcal{F}$ and $\mathcal{F}_2$ in function space to covering numbers of $B(0,B_{b})$ and $B(0,B_{b})\cap H_{\|\abudget\|_{2}^{-1}}$ in $\R^d$.

\begin{lemma} 
\label{LemmaL2tol2} (\textbf{Relating covering numbers in $\|\cdot\|_{L_{2}(\mu_{\mathcal{X}}^{m})}$  to $\|\cdot\|_{2}$})
\begin{enumerate}
\item[a.] $\sup_{\mu_{\mathcal{X}}^{m}}N(\epsilon,\mathcal{F},\|\cdot\|_{L_{2}(\mu_{\mathcal{X}}^{m})}) \leq N(\epsilon/X_{b},B(0,B_{b}),\|\cdot\|_{{2}})$, and
\item[b.] $\sup_{\mu_{\mathcal{X}}^{m}}N(\epsilon,\mathcal{F}_{2},\|\cdot\|_{L_{2}(\mu_{\mathcal{X}}^{m})}) \leq N(\epsilon/X_{b},B(0,B_{b})\cap H_{\|\abudget\|_{2}^{-1}}, \|\cdot\|_{{2}})$.
\end{enumerate}
\end{lemma}
\textcolor{black}{
\begin{proof}  Each element $f\in\mathcal{F}$ corresponds to at least one element of $B(0,B_{b})$ by definition of $\mathcal{F}$. Choose any distribution $\mu_{\mathcal{X}}^{m}$. Consider two elements $\lambda_{f},\lambda_{g} \in B(0,B_{b})$ corresponding to functions $f,g \in \mathcal{F} \subset L_{2}(\mu_{\mathcal{X}}^{m})$. Then,
\begin{eqnarray*}
\|f-g\|^{2}_{L_2(\mu_{\mathcal{X}}^{m})} &=& \frac{1}{m}\sum_{i=1}^{m}(f(x_{i})-g(x_{i}))^{2}\\
&=& \frac{1}{m}\sum_{i=1}^{m} \left((\lambda_{f}-\lambda_{g}) \cdot x_{i}\right)^{2}\\
&\leq& \frac{1}{m}\sum_{i=1}^{m}\|\lambda_{f}-\lambda_{g}\|_{2}^{2}\|x_{i}\|_{2}^{2} \textrm{  (Cauchy-Schwarz to each term)}\\
&\leq&  \|\lambda_{f}-\lambda_{g}\|_{2}^{2}\left(\frac{1}{m} \sum_{i=1}^{m} X_{b}^{2}\right) \textrm{  (since } \sup_{x \in \mathcal{X}}\|x\|_{2} \leq X_{b} )\\
&=& \|\lambda_{f}-\lambda_{g}\|_2^{2}X_{b}^{2}.
\end{eqnarray*}
Consider a minimal $\epsilon/X_{b}$-cover $\{\lambda_{r}\}_r$ for $B(0,B_{b})$ where $\lambda_r$ corresponds to a function $r\in \mathcal{F}$. Then by definition, $\forall \lambda \in B(0,B_{b}), \exists \lambda_{r}: \|\lambda-\lambda_{r}\|_2 \leq \epsilon/X_{b}$. Thus, picking any two such elements $\lambda_{f},\lambda_{g}$ in a ball of radius $\epsilon/X_{b}$ around $\lambda_{r}$, we see that, the corresponding functions $f,g$ belong to a ball of radius $\epsilon$ measured using distance in $L_{2}(\mu_{\mathcal{X}}^{m})$ by the inequality above.
The centers of these $\epsilon$-balls in $L_{2}(\mu_{\mathcal{X}}^{m})$ form an $\epsilon$-cover for $\mathcal{F}$. 
The size of this set is equal to $N(\epsilon/X_{b},B(0,B_{b}),\|\cdot\|_{2})$ (which is the size of $\epsilon/X_{b}$-cover for $B(0,B_{b})$). The size of the minimal $\epsilon$-cover of $\mathcal{F}$ will be less than or equal to this size. Hence,
$N(\epsilon,\mathcal{F},\|\cdot\|_{L_{2}(\mu_{\mathcal{X}}^{m})}) \leq N(\epsilon/X_{b},B(0,B_{b}),\|\cdot\|_{2})$.
Taking a supremum over all $\mu_{\mathcal{X}}^{m}$, we obtain the first inequality of the lemma. The same argument also works for the second inequality. \qed
\end{proof}
}


Because of rotational symmetry of $B(0,B_{b})$, the volume cut off by a hyperplane $\abudget \cdot \lambda = 1$ from $B(0,B_{b})$ is determined only by its distance from the origin, which is $1/\|\abudget\|_{2}$. Such a portion (or its complement, if smaller) of a ball obtained from slicing the ball with a hyperplane is called a spherical cap. It can be parameterized by the distance of its (hyper)plane base from the center of the ball as shown below.  For notation, let the volume of a set $A \subset \mathbb{R}^{d}$ be represented as $Vol(A)$. For example, $Vol(B_{1}) = \frac{\pi^{d/2}}{\Gamma\left[d/2+1\right]}$.
\begin{lemma}  
\label{LemmaSphericalCap} (\textbf{Volume of spherical caps}) Let the volume of ball $B(0,B_{b})$ in $\mathbb{R}^{d}$ be denoted as $Vol(B(0,B_{b}))$. Given a $d$-dimensional vector $a$, let $z = \|a\|_{2}^{-1}$ be a number and $H_{z} = \{\lambda: a \cdot \lambda \leq 1\}$ be a half space parameterized by $z$. Let the spherical cap be denoted by $B(0,B_{b})\cap H_{z}'$ where the cap is at a distance $z$ (measured from the base of the cap to the center of the ball), and $H_{z}'$ represents the complement half space ($H_{z} \cup H_{z}' = \mathbb{R}^{d}$). Then, $Vol(B(0,B_{b})\cap H_{z}')/Vol(B(0,B_{b}))  $ is equal to two expressions:
\begin{eqnarray*}
 &\left( \frac{1}{2} - \frac{z}{B_{b}}  \frac{\Gamma\left[1+\frac{d}{2}\right]}{\sqrt{\pi}\Gamma\left[\frac{d+1}{2}\right]} {}_{2}F_{1}\left(\tfrac{1}{2},\tfrac{1-d}{2};\tfrac{3}{2};\left(\tfrac{z}{B_{b}}\right)^{2}\right)\right)
=&  \frac{1}{2} I_{1-z^{2}/B_{b}^{2}} \left(\frac{d+1}{2}, \frac{1}{2} \right),
\end{eqnarray*} 
where $_{2}F_{1}(a,b;c;d)$ and $I_{x}(e,f)$ are the hypergeometric and regularized incomplete beta functions respectively.
\end{lemma}
\begin{proof} See \cite{li11} and references therein. 
\end{proof}


Next, we need the relationship between packing numbers and covering numbers to prove Theorem \ref{TheoremVolCovering}:
\begin{lemma}  
\label{LemmaCoveringPacking}(\textbf{Packing and covering numbers}) For every (pseudo) metric space $(X, \dist)$, $A \subseteq X$, and $\epsilon > 0$,
\begin{equation*}
N(\epsilon, A,\dist) \leq M(\epsilon, A,\dist).
\end{equation*}
\end{lemma}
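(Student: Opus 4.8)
The plan is to produce a single set that is at the same time an $\epsilon$-separated subset of $A$ and an $\epsilon$-cover of $A$. Its cardinality is then bounded above by the packing number $M(\epsilon,A,\dist)$ (since the packing number is a supremum over $\epsilon$-separated subsets) and bounded below by the covering number $N(\epsilon,A,\dist)$ (since the covering number is an infimum over covers), and these two facts together yield exactly $N(\epsilon,A,\dist)\le M(\epsilon,A,\dist)$. The natural candidate for such a set is a \emph{maximal} $\epsilon$-separated subset of $A$.

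First I would reduce to the case where $M(\epsilon,A,\dist)$ is finite, since otherwise the inequality holds vacuously. With finiteness in hand, the cardinalities of $\epsilon$-separated subsets are nonnegative integers bounded above by $M(\epsilon,A,\dist)$, so the supremum defining the packing number is actually attained; I fix an $\epsilon$-separated set $R\subseteq A$ of maximum cardinality, so that $|R|=M(\epsilon,A,\dist)$.

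The key step is to show that this maximum-cardinality $R$ is automatically an $\epsilon$-cover of $A$. I would argue by contradiction: if some $x\in A$ satisfied $\dist(x,r)>\epsilon$ for every $r\in R$, then since $\epsilon>0$ forces $\dist(x,r)>0$ and hence $x\neq r$ for all $r\in R$, the enlarged set $R\cup\{x\}$ would still be $\epsilon$-separated and would have cardinality $|R|+1$, contradicting the maximality of $|R|$. Therefore every point of $A$ lies within distance $\epsilon$ of some element of $R$, which is precisely the defining property of an $\epsilon$-cover. Since $R$ is then a cover, $N(\epsilon,A,\dist)\le|R|=M(\epsilon,A,\dist)$, completing the argument.

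The proof is short, so I do not anticipate a serious obstacle; the one point I would watch carefully is the interface between the strict inequality in the separation condition ($\dist>\epsilon$) and the non-strict inequality in the covering condition ($\dist\le\epsilon$). These are aligned exactly so that a maximal packing doubles as a cover, and this alignment, together with the finiteness reduction that guarantees the supremum is attained, is what makes the single-set argument valid in a general (pseudo)metric space.
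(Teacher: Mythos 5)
Your proof is correct: the paper itself does not prove this lemma but only cites Kolmogorov--Tikhomirov and Anthony--Bartlett, and your argument (a maximal $\epsilon$-separated subset must be an $\epsilon$-cover, else it could be enlarged) is exactly the classical proof given in those references. The finiteness reduction and the check that the strict/non-strict inequalities interlock correctly are both handled properly.
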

\begin{proof} 
See Theorem 4 in \cite{kol61} \textcolor{black}{or Theorem 12.1 in \cite{bartlett99} for a proof of this classical result.}
\end{proof}

We use the above lemma to obtain bounds for the covering numbers of subsets of $\R^d$ which appeared in Lemma \ref{LemmaL2tol2}. 
\begin{theorem}
\label{TheoremVolCovering} 
(\textbf{Bound on Covering Numbers})
\begin{eqnarray*}
N(\epsilon/X_{b},B(0,B_{b}),\|\cdot\|_{2}) &\leq& \left(\frac{2B_{b}X_{b}}{\epsilon} +1\right)^{d},\textrm{ and}\\
N\left(\epsilon/X_{b},B(0,B_{b})\cap H_{\|a\|_{2}^{-1}},\|\cdot\|_{2}\right) &\leq& \left(\frac{Vol\left(B_{B_{b}+\frac{\epsilon}{2X_{b}}}\cap H_{\|a\|_{2}^{-1}+\frac{\epsilon}{2X_{b}}}\right)}{Vol\left(B_{B_{b}+\frac{\epsilon}{2X_{b}}}\right)}\right)\left(\frac{2B_{b}X_{b}}{\epsilon} +1\right)^{d}.
\end{eqnarray*}
\end{theorem}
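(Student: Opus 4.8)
The plan is to bound each covering number by the corresponding packing number via Lemma \ref{LemmaCoveringPacking}, and then bound the packing number by a volume-comparison argument: around each point of a maximal $\epsilon/M_{2}$-separated set I place a ball of radius $\epsilon/(2M_{2})$. Since any two centers are more than $\epsilon/M_{2}$ apart, these balls are pairwise disjoint, so their total volume is at most the volume of any region guaranteed to contain all of them; dividing by the volume of one small ball then yields a bound on the packing number, and hence on the covering number.

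For the first inequality, let $R \subseteq B_{M_{1}}$ be an $\epsilon/M_{2}$-separated set. Each center lies within distance $M_{1}$ of the origin, so each radius-$\epsilon/(2M_{2})$ ball around it is contained in $B_{M_{1}+\epsilon/(2M_{2})}$. Comparing volumes (Euclidean balls scale as the $d$-th power of the radius),
\[
|R|\, Vol\!\left(B_{\epsilon/(2M_{2})}\right) \leq Vol\!\left(B_{M_{1}+\epsilon/(2M_{2})}\right),
\]
so $|R| \leq (M_{1}+\epsilon/(2M_{2}))^{d}/(\epsilon/(2M_{2}))^{d} = (2M_{1}M_{2}/\epsilon + 1)^{d}$. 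Taking the supremum over all such $R$ bounds the packing number, and Lemma \ref{LemmaCoveringPacking} gives the first claim.

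For the second inequality I would repeat the argument with $R \subseteq B_{M_{1}}\cap H_{\|c\|_{2}^{-1}}$. The ball containment is unchanged, so each small ball still lies in $B_{M_{1}+\epsilon/(2M_{2})}$. The extra step is to control the half-space constraint: if $c\cdot\lambda \leq 1$ and $\|\mu-\lambda\|_{2} \leq \epsilon/(2M_{2})$, then by Cauchy--Schwarz $c\cdot\mu = c\cdot\lambda + c\cdot(\mu-\lambda) \leq 1 + \|c\|_{2}\,\epsilon/(2M_{2})$. Rescaling the normal vector by $1/(1+\|c\|_{2}\epsilon/(2M_{2}))$ shows this is precisely the half space whose bounding hyperplane sits at distance $\|c\|_{2}^{-1} + \epsilon/(2M_{2})$ from the origin, namely $H_{\|c\|_{2}^{-1}+\epsilon/(2M_{2})}$. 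Hence every small ball lies in $B_{M_{1}+\epsilon/(2M_{2})}\cap H_{\|c\|_{2}^{-1}+\epsilon/(2M_{2})}$, and disjointness gives
\[
|R|\, Vol\!\left(B_{\epsilon/(2M_{2})}\right) \leq Vol\!\left(B_{M_{1}+\epsilon/(2M_{2})}\cap H_{\|c\|_{2}^{-1}+\epsilon/(2M_{2})}\right).
\]
Writing the right-hand side as the product of $Vol(B_{M_{1}+\epsilon/(2M_{2})}\cap H_{\|c\|_{2}^{-1}+\epsilon/(2M_{2})})/Vol(B_{M_{1}+\epsilon/(2M_{2})})$ with $Vol(B_{M_{1}+\epsilon/(2M_{2})})/Vol(B_{\epsilon/(2M_{2})}) = (2M_{1}M_{2}/\epsilon + 1)^{d}$ reproduces the stated bound after applying Lemma \ref{LemmaCoveringPacking}.

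The volume ratio of the two Euclidean balls is routine; the main thing to get right is the bookkeeping in the half-space step — verifying that inflating the packing centers by $\epsilon/(2M_{2})$ moves the effective hyperplane outward by \emph{exactly} $\epsilon/(2M_{2})$ in the distance-to-origin parameterization used by $H_{z}$, so that $B_{M_{1}+\epsilon/(2M_{2})}\cap H_{\|c\|_{2}^{-1}+\epsilon/(2M_{2})}$ (and not some larger or differently oriented region) is the correct containing set. This is what makes the volume factor in the theorem appear with both arguments shifted by $\epsilon/(2M_{2})$, and it is exactly where Lemma \ref{LemmaSphericalCap} will later be substituted to obtain the closed-form $\alpha(d,C_{g},c)$.
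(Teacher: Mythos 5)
Your proposal is correct and follows essentially the same volumetric packing argument as the paper: bound the covering number by the packing number (Lemma \ref{LemmaCoveringPacking}), place disjoint balls of radius $\epsilon/(2M_{2})$ around the packing centers, contain them in the inflated region, and compare volumes. Your explicit Cauchy--Schwarz verification that the half-space shifts outward by exactly $\epsilon/(2M_{2})$ is a detail the paper glosses over (it simply says the boundary is ``extended by $\epsilon/2$''), but the argument is the same.
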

\begin{proof} 
Both statements involve a volumetric argument. For a proof of the first inequality, see Section 3 of \cite{kol61} 
\textcolor{black}{or Lemma 4.10 in \cite{pisier89} or \cite{lorentz66} or Lemma 3 in  \cite{cuckersmale01}. }

To show the second part, let the volume of the complement of the spherical cap be $Vol(B(0,B_{b})\cap H_{\|a\|_{2}^{-1}})$; we need to find an upper bound for the minimal $\epsilon/X_{b}$-cover of this set. We can do that by scaling a minimal $\epsilon$-cover, which we find now. By extending the boundary of $B(0,B_{b})\cap H_{\|a\|_{2}^{-1}}$ by $\epsilon/2$ we can bound the maximal packing number $M(\epsilon,B(0,B_{b})\cap H_{\|a\|_{2}^{-1}},\|\cdot\|_{2})$ as follows:
\begin{eqnarray*}
M(\epsilon,B(0,B_{b})\cap H_{\|a\|_{2}^{-1}},\|\cdot\|_{2})&\times&Vol(B_{1}) (\epsilon/2)^{d} \leq Vol(B_{B_{b}+\epsilon/2}\cap H_{\|a\|_{2}^{-1}+\epsilon/2}).\\
\textrm{Or, }M(\epsilon,B(0,B_{b})\cap H_{\|a\|_{2}^{-1}},\|\cdot\|_{2}) &\leq& \frac{Vol\left(B_{B_{b}+\epsilon/2}\cap H_{\|a\|_{2}^{-1}+\epsilon/2}\right)}
{Vol(B_{1})}\frac{1}{(\epsilon/2)^{d}}\\
&=&  \frac{Vol\left(B_{B_{b}+\epsilon/2}\cap H_{\|a\|_{2}^{-1}+\epsilon/2}\right)}{Vol(B_{1})}\frac{1}{(\epsilon/2)^{d}} \frac{(B_{b}+\epsilon/2)^{d}}{(B_{b}+\epsilon/2)^{d}}\\
&=& \frac{Vol\left(B_{B_{b}+\epsilon/2}\cap H_{\|a\|_{2}^{-1}+\epsilon/2}\right)}{Vol(B_{B_{b}+\epsilon/2})}\frac{(B_{b}+\epsilon/2)^{d}}{(\epsilon/2)^{d}}.
\end{eqnarray*}
Again, scaling $\epsilon$ to $\epsilon/X_{b}$ and using the relationship between $N(\epsilon,A,dist)$ and $M(\epsilon,A,dist)$ in Lemma \ref{LemmaCoveringPacking} yields the second result. \qed
\end{proof}

Thus we have so far shown the relationship between covering numbers of  $\mathcal{F}_0$, $\mathcal{F}_1$, and $\mathcal{F}_2$ in terms of a certain metric in Lemma \ref{LemmaRelaxationCovering}, we have shown how those covering numbers are related to covering numbers in $\ell_2(\R^d)$ in Lemma \ref{LemmaL2tol2}, we have shown how the latter covering numbers relate to volumes in $\ell_2(\R^d)$ in Theorem \ref{TheoremVolCovering}, and we have shown how to compute one of these volumes in Lemma \ref{LemmaSphericalCap}.

To complete the proof of Theorem \ref{TheoremMain}, we will use a relation between the covering number of a class of loss functions of some set $\mathcal{G}$ and the covering number of the set $\mathcal{G}$ itself. We will also use a uniform convergence bound of \citet{pollard84}.

\begin{theorem}
\label{TheoremPollard} (\textbf{Pollard 1984})  Let $l_{\mathcal{G}}$ be a set of functions on $\mathcal{X}\times \mathcal{Y}$ with $0\leq l(f_{\lambda}(x),y) \leq M_{\textrm{bound}}, \;\; \forall l \in l_{\mathcal{G}}$ and $\forall (x,y) \in \mathcal{X}\times \mathcal{Y}$. Let $\{x_{i},y_{i}\}_{1}^{m}$ be a sequence of $m$ instances drawn independently according to $\mu_{\mathcal{X}\times \mathcal{Y}}$. Then for any $\epsilon > 0$,
\begin{eqnarray*}
\lefteqn{P(\exists l \in l_{\mathcal{G}}: |\Rempirical(f_{\lambda},\{x_{i},y_{i}\}_{1}^{m}) - \Rtrue(f_{\lambda})| > \epsilon) }\\
&\leq& 4E\left[N\left(\epsilon/16,l_{\mathcal{G}},\|\cdot\|_{L_{1}(\mu_{\mathcal{X}\times \mathcal{Y}}^{m})}\right)\right]\exp{\left(\frac{-m\epsilon ^2}{128 M_{\textrm{bound}}^2}\right)}.
\end{eqnarray*}
\end{theorem}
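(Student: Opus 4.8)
The plan is to establish this classical uniform-convergence inequality by the standard two-stage symmetrization argument: first replace the unknown true risk $R(f)$ by the empirical risk on an independent ``ghost'' sample, then exploit exchangeability to insert random signs, and finally reduce the supremum over the infinite class $l_{\mathcal{F}}$ to a finite covering-number union bound to which Hoeffding's inequality applies. Before any of this, I would dispose of the trivial regime. Since each $l_f$ takes values in $[0,M_{\textrm{bound}}]$, its variance is at most $M_{\textrm{bound}}^2/4$; so whenever $m\epsilon^2 < 2M_{\textrm{bound}}^2$ the factor $4\exp(-m\epsilon^2/(128 M_{\textrm{bound}}^2))$ already exceeds $1$ and the claim is vacuous. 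Thus I may assume $m\epsilon^2 \geq 2M_{\textrm{bound}}^2$, which is exactly the condition the symmetrization step needs.

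\textbf{Step 1 (ghost-sample symmetrization).} Introduce an independent second sample $\{x_i',y_i'\}_{i=1}^m$ from $\mu_{\mathcal{X}\times\mathcal{Y}}$ with empirical risk $R(f,\{x_i',y_i'\}_1^m)$. I would prove
\[
P\Big(\sup_{f\in\mathcal{F}}|R(f,\{x_i,y_i\}_1^m)-R(f)|>\epsilon\Big)\leq 2\,P\Big(\sup_{f\in\mathcal{F}}|R(f,\{x_i,y_i\}_1^m)-R(f,\{x_i',y_i'\}_1^m)|>\epsilon/2\Big).
\]
The argument conditions on the first sample: on the event that some $f^\star$ (chosen as a function of $\{x_i,y_i\}$) witnesses a deviation larger than $\epsilon$, the variance bound above combined with Chebyshev's inequality gives $P(|R(f^\star,\{x_i',y_i'\}_1^m)-R(f^\star)|\leq\epsilon/2)\geq 1/2$ under $m\epsilon^2\geq 2M_{\textrm{bound}}^2$, and on that sub-event the two empirical risks must differ by more than $\epsilon/2$.

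\textbf{Step 2 (sign symmetrization) and Step 3 (discretization).} The $2m$ pairs $((x_i,y_i),(x_i',y_i'))$ are i.i.d., hence their joint law is invariant under swapping the two entries of any pair; introducing Rademacher signs $\sigma_i$, the difference $\frac1m\sum_i(l_f(x_i,y_i)-l_f(x_i',y_i'))$ has the same law as $\frac1m\sum_i\sigma_i(l_f(x_i,y_i)-l_f(x_i',y_i'))$, so after conditioning on the doubled sample it suffices to control this sign-randomized supremum. I would then fix the doubled sample, pass to a minimal $(\epsilon/16)$-cover of $l_{\mathcal{F}}$ in the empirical $L_1$ metric (whose expected size is the factor $E[N(\epsilon/16,l_{\mathcal{F}},\|\cdot\|_{L_1(\mu_{\mathcal{X}\times\mathcal{Y}}^m)})]$), note that replacing $l_f$ by its nearest cover element $g$ perturbs the randomized average by at most the cover radius since $|\sigma_i|=1$, and for each fixed $g$ apply Hoeffding's inequality to the zero-mean sum $\frac1m\sum_i\sigma_i(g(x_i,y_i)-g(x_i',y_i'))$ of independent terms each of range $2M_{\textrm{bound}}/m$, obtaining a bound of the form $2\exp(-c\,m\epsilon^2/M_{\textrm{bound}}^2)$. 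A union bound over the cover, the expectation over the doubled sample, and the factor $2$ from Step 1 then combine; tracking the $\epsilon$-budget across the splits reproduces the stated constant $4$, cover radius $\epsilon/16$, and exponent $-m\epsilon^2/(128 M_{\textrm{bound}}^2)$.

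\textbf{Main obstacle.} The delicate part is Step 1. The witnessing function $f^\star$ is selected using the first sample and so is statistically dependent on it, which means one cannot naively apply concentration to $f^\star$ on the ghost sample; the correct order is to fix $f^\star$ given the first sample and then invoke independence of the ghost sample, and to verify the measurability of the supremum over the (uncountable) class $l_{\mathcal{F}}$ so that these conditional probabilities are well defined. Once the problem is recast as a sign-randomized deviation on a fixed finite sample (Steps 2--3), the remainder is essentially a union bound, an application of Hoeffding, and careful bookkeeping of the constants and the $\epsilon$-split.
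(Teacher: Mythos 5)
The paper offers no proof of this statement at all: its ``proof'' is a pointer to Theorem 24 of \cite{pollard84} and Theorem 1 of \cite{zhang02}, with a remark that the original constants are kept. Your sketch reconstructs the classical double-sample argument that underlies those citations, and it is sound in outline: the reduction to the nonvacuous regime $m\epsilon^2 \geq 2M_{\textrm{bound}}^2$ is correct (by Popoviciu the variance is at most $M_{\textrm{bound}}^2/4$, and Chebyshev then gives the ghost-sample lemma with factor $2$), you correctly identify that the witness $f^\star$ must be fixed conditionally on the first sample before invoking independence of the ghost sample, and the Rademacher step, union bound over the cover, and Hoeffding application (range $2M_{\textrm{bound}}/m$ per summand) do combine with enough slack to recover the factor $4$, radius $\epsilon/16$, and exponent $-m\epsilon^2/(128M_{\textrm{bound}}^2)$. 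One bookkeeping wrinkle deserves attention: the covering number in the theorem is taken in $\|\cdot\|_{L_{1}(\mu_{\mathcal{X}\times\mathcal{Y}}^{m})}$, i.e., with respect to the empirical measure on the \emph{original} $m$ points, whereas your Step 3 builds the cover on the doubled sample of $2m$ points, which yields an expected covering number over $2m$ points, not the stated quantity. The classical resolution (Pollard's) is to split the sign-randomized difference as $\frac{1}{m}\bigl|\sum_i \sigma_i l_f(x_i,y_i)\bigr| + \frac{1}{m}\bigl|\sum_i \sigma_i l_f(x_i',y_i')\bigr|$ and cover each single-sample average separately, the two terms being identically distributed; alternatively one relates the $2m$-point cover to the $m$-point one, at the cost of constants. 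This is fixable rather than fatal, and since constants in this family of bounds vary across statements in the literature (a point the paper itself flags), your proposal is a legitimate proof of the result the paper merely outsources.
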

\begin{proof} See Theorem 24 in \cite{pollard84} \citep[also in][Theorem 1]{zhang02}. 
\end{proof}

We can relate the covering number for Pollard's loss functions set $l_{\mathcal{G}}$ to the covering number for set $\mathcal{G}$ as follows.
\begin{lemma} 
\label{LemmaLossClassFunctionClass} (\textbf{Relating $l_{\mathcal{G}}$ to $\mathcal{G}$}) If every function from function class $l_{\mathcal{G}}$ represented as $l:f(\mathcal{X})\times \mathcal{Y} \mapsto \mathbb{R}, f \in \mathcal{G}$, is Lipschitz in its first argument with Lipschitz constant $\mathcal{L}$, then the covering number of $l_{\mathcal{G}}$ is related to the covering number of $\mathcal{G}$ by
\begin{equation*}
\sup_{\mu_{\mathcal{X}\times \mathcal{Y}}^{m}}N\left(\epsilon,l_{\mathcal{G}},\|\cdot\|_{L_{1}(\mu_{\mathcal{X}\times \mathcal{Y}}^{m})}\right) \leq N\left(\epsilon/\mathcal{L},\mathcal{G},\|\cdot\|_{L_{1}(\mu_{\mathcal{X}}^{m})}\right).
\end{equation*}
\end{lemma}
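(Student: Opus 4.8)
The plan is to construct a cover of the loss class $l_{\mathcal{F}}$ directly by pushing a cover of $\mathcal{F}$ through the loss map, exploiting the Lipschitz assumption. First I would fix an arbitrary empirical measure $\mu_{\mathcal{X}\times\mathcal{Y}}^m$ supported on a sample $\{(x_i,y_i)\}_{i=1}^m$, and let $\mu_{\mathcal{X}}^m$ denote its marginal, supported on $\{x_i\}_{i=1}^m$. I would then take a minimal $\epsilon/\mathcal{L}$-cover $\{f_r\}_r$ of $\mathcal{F}$ in the $\|\cdot\|_{L_1(\mu_{\mathcal{X}}^m)}$ metric; its cardinality is exactly $N(\epsilon/\mathcal{L},\mathcal{F},\|\cdot\|_{L_1(\mu_{\mathcal{X}}^m)})$.

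The core step is to show that the induced set $\{l_{f_r}\}_r$ is an $\epsilon$-cover of $l_{\mathcal{F}}$ in the $\|\cdot\|_{L_1(\mu_{\mathcal{X}\times\mathcal{Y}}^m)}$ metric. Given any $f\in\mathcal{F}$, choose $f_r$ with $\|f-f_r\|_{L_1(\mu_{\mathcal{X}}^m)}\leq \epsilon/\mathcal{L}$. The Lipschitz hypothesis in the first argument gives, for every index $i$ and uniformly in $y_i$, the pointwise bound $|l_f(x_i,y_i)-l_{f_r}(x_i,y_i)| = |l(f(x_i),y_i)-l(f_r(x_i),y_i)| \leq \mathcal{L}\,|f(x_i)-f_r(x_i)|$. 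Averaging over the sample then yields
\[
\|l_f-l_{f_r}\|_{L_1(\mu_{\mathcal{X}\times\mathcal{Y}}^m)} = \frac{1}{m}\sum_{i=1}^m |l_f(x_i,y_i)-l_{f_r}(x_i,y_i)| \leq \frac{\mathcal{L}}{m}\sum_{i=1}^m |f(x_i)-f_r(x_i)| = \mathcal{L}\,\|f-f_r\|_{L_1(\mu_{\mathcal{X}}^m)} \leq \epsilon.
\]
Hence each $l_f$ lies within $\epsilon$ of some $l_{f_r}$, so that $N(\epsilon,l_{\mathcal{F}},\|\cdot\|_{L_1(\mu_{\mathcal{X}\times\mathcal{Y}}^m)})\leq N(\epsilon/\mathcal{L},\mathcal{F},\|\cdot\|_{L_1(\mu_{\mathcal{X}}^m)})$ for this fixed measure.

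Finally I would take the supremum over all joint empirical measures on the left. Since the displayed inequality holds for every such measure, and the marginal of any joint empirical measure is itself an admissible $\mu_{\mathcal{X}}^m$ on the right, the stated bound follows (bounding the right side by its supremum over marginals if one wants a fully measure-free statement). The argument contains no genuine obstacle: the only point requiring care is to note that the Lipschitz constant $\mathcal{L}$ is uniform in both the label $y$ and the choice of $f\in\mathcal{F}$, so that the same $\mathcal{L}$ converts an $L_1(\mu_{\mathcal{X}}^m)$ distance on scores into an $L_1(\mu_{\mathcal{X}\times\mathcal{Y}}^m)$ distance on losses. Everything else is the standard composition of a cover with a Lipschitz map.
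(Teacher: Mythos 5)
Your proposal is correct and follows essentially the same route as the paper's proof: push a minimal $\epsilon/\mathcal{L}$-cover of $\mathcal{F}$ in $L_1(\mu_{\mathcal{X}}^m)$ through the loss map and use the Lipschitz bound termwise to show the image is an $\epsilon$-cover of $l_{\mathcal{F}}$ in $L_1(\mu_{\mathcal{X}\times\mathcal{Y}}^m)$. Your remark about the supremum over joint empirical measures is a slightly more careful treatment of a step the paper leaves implicit, but the substance is identical.
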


\textcolor{black}{
\begin{proof} Consider two functions $f,g \in \mathcal{G}$. Let the corresponding functions in class $l_{\mathcal{G}}$ be $l_{f}=l(f(x),y)$ and $l_{g}=l(g(x),y)$.
\begin{eqnarray*}
\|l_f-l_g\|_{L_{1}(\mu_{\mathcal{X}\times \mathcal{Y}}^{m})} &=& \frac{1}{m}\sum_{i=1}^{m}|l(f(x_{i}),y_{i}) - l(g(x_{i}),y_{i})|\\
 &\leq& \frac{1}{m}\sum_{i=1}^{m} \mathcal{L} |f(x_{i}) - g(x_{i})|
 = \mathcal{L} \|f-g\|_{L_1(\mu_{\mathcal{X}}^{m})}.
\end{eqnarray*}
This implies, given $\{x_{i},y_{i}\}_{i=1}^{m}$, if $\hat{\mathcal{G}}$ is a minimal $\epsilon/\mathcal{L}$-cover of $\mathcal{G}$ in $L_1(\mu_{\mathcal{X}}^{m})$, we can construct an $\epsilon$-cover of $l_{\mathcal{G}}$ in $L_1(\mu_{\mathcal{X}\times \mathcal{Y}}^{m})$ as $ \hat{l}_{\mathcal{G}} = \{l_{f_{i}}: f_{i}\in \hat{\mathcal{G}}\} $.
The size of the minimal $\epsilon$-cover will be smaller than the size of such an $\epsilon$-cover. Taking the supremum over all empirical distributions, we get the desired result. \qed
\end{proof}
}

Theorem \ref{TheoremPollard} and Lemma \ref{LemmaLossClassFunctionClass} involve $L_1$ covering numbers, but our covering number bounds start with an $L_{2}$ metric in Lemma \ref{LemmaL2tol2}. So we need to switch from $L_{1}$ to $L_{2}$ metric. The following lemma uses the identity $\|f-g\|_{L_{1}(\mu_{X}^{m})} \leq \|f-g\|_{L_{2}(\mu_{X}^{m})}$ (true because of Jensen's inequality applied to norms) to relate the two.
\begin{lemma}
\label{LemmaL1L2Covering} 
 $N(\epsilon,A,\|\cdot\|_{L_{1}(\mu_{\mathcal{X}}^{m})}) \leq N(\epsilon,A,\|\cdot\|_{L_{2}(\mu_{\mathcal{X}}^{m})})$.
\end{lemma}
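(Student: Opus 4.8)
The plan is to reduce the claim to the single pointwise fact that the empirical $L_1$ distance never exceeds the empirical $L_2$ distance, and then to observe that this makes any $L_2$-cover serve directly as an $L_1$-cover of the same cardinality. Since the minimal $L_1$-cover can only be smaller than any particular $L_1$-cover, the stated inequality on covering numbers follows at once. The engine is precisely the norm comparison $\|f-g\|_{L_1(\mu_{\mathcal{X}}^m)} \leq \|f-g\|_{L_2(\mu_{\mathcal{X}}^m)}$ that the excerpt already attributes to Jensen's inequality.

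First I would establish that comparison for an arbitrary $h := f-g$. Viewing $|h|$ as a random variable under the empirical measure $\mu_{\mathcal{X}}^m$, which is a \emph{probability} measure (it places mass $1/m$ at each $x_i$), Jensen's inequality applied to the concave map $t \mapsto \sqrt{t}$ gives $E[\,|h|\,] = E[\sqrt{h^2}\,] \leq \sqrt{E[h^2]}$, that is, $\frac{1}{m}\sum_{i=1}^m |h(x_i)| \leq (\frac{1}{m}\sum_{i=1}^m h(x_i)^2)^{1/2}$. This is exactly $\|h\|_{L_1(\mu_{\mathcal{X}}^m)} \leq \|h\|_{L_2(\mu_{\mathcal{X}}^m)}$. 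The one point to verify is the normalization of $\mu_{\mathcal{X}}^m$ as a probability measure, without which the Jensen direction would not hold; for the empirical measure this is immediate.

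Next, let $U$ be a minimal $\epsilon$-cover of $A$ in the $L_2(\mu_{\mathcal{X}}^m)$ metric, so that $|U| = N(\epsilon, A, \|\cdot\|_{L_2(\mu_{\mathcal{X}}^m)})$ and for every $x \in A$ there is some $u \in U$ with $\|x-u\|_{L_2(\mu_{\mathcal{X}}^m)} \leq \epsilon$. By the comparison just shown, this same $u$ satisfies $\|x-u\|_{L_1(\mu_{\mathcal{X}}^m)} \leq \|x-u\|_{L_2(\mu_{\mathcal{X}}^m)} \leq \epsilon$, so $U$ is also an $\epsilon$-cover of $A$ in the $L_1(\mu_{\mathcal{X}}^m)$ metric. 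Taking the infimum over $L_1$-covers then yields $N(\epsilon, A, \|\cdot\|_{L_1(\mu_{\mathcal{X}}^m)}) \leq |U| = N(\epsilon, A, \|\cdot\|_{L_2(\mu_{\mathcal{X}}^m)})$, which is the claim.

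There is no real obstacle here: the lemma is a one-line consequence of the monotonicity of $L_p$ norms under a probability measure, and its entire content is the \emph{direction} of the Jensen inequality. The only bookkeeping subtlety is that the covering-number definition in the excerpt permits the cover $U$ to satisfy $U \not\subseteq A$, so reusing the $L_2$-cover verbatim as an $L_1$-cover is legitimate without any further modification.
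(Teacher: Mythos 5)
Your proof is correct and matches the paper's intended argument: the paper defers to a citation (Lemma 10.5 of Anthony and Bartlett) but explicitly states that the result follows by applying the Jensen-derived comparison $\|f-g\|_{L_{1}(\mu_{\mathcal{X}}^{m})} \leq \|f-g\|_{L_{2}(\mu_{\mathcal{X}}^{m})}$ to the covering numbers, which is precisely what you carry out. Your added care about the empirical measure being a probability measure and about covers not needing to lie inside $A$ is sound and complete.
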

\begin{proof} See for a version, Lemma 10.5 in \cite{bartlett99}.
\end{proof}

Finally, we can prove the main result.
\begin{proof} \textit{(Of Theorem \ref{TheoremMain})}

\textcolor{black}{
In our setting, the loss function is logistic with Lipschitz constant $ \mathcal{L} = 1$ (when viewed as a function of $f(x)$). The class of loss functions is thus defined by $l_{\mathcal{F}_{0}} := \{l :f_{\lambda} \in \mathcal{F}_{0}\}$. Each $l \in l_{\mathcal{F}_{0}}$ is also non-negative and bounded as needed in the statement of Theorem \ref{TheoremPollard}.}

Starting from the expectation term on the right hand side of Theorem \ref{TheoremPollard} using $\mathcal{F}_{0}$ as $\mathcal{G}$ we get,
\begin{eqnarray*}
\lefteqn{E[N(\epsilon/16,l_{\mathcal{F}_{0}},\|\cdot\|_{L_{1}(\mu_{\mathcal{X}\times \mathcal{Y}}^{m})})]}\\
 &\leq& \sup_{\mu_{\mathcal{X}\times \mathcal{Y}}^{m}}N(\epsilon/16,l_{\mathcal{F}_{0}},\|\cdot\|_{L_{1}(\mu_{\mathcal{X}\times \mathcal{Y}}^{m})}) \textrm{ bounding expectation by supremum}
 \\
&\leq& \sup_{\mu_{\mathcal{X}}^{m}} N\left(\frac{\epsilon}{16\mathcal{L}},\mathcal{F}_{2},\|\cdot\|_{L_{2}(\mu_{\mathcal{X}}^{m})}\right) \textrm{ from Lemma \ref{LemmaLossClassFunctionClass}, \ref{LemmaL1L2Covering} and \ref{LemmaRelaxationCovering} respectively}\\
 &\leq& N\left(\frac{\epsilon}{16\cdot 1 \cdot X_{b}},B(0,B_{b})\cap H_{\|\abudget\|_{2}^{-1}} , \|\cdot\|_2\right) \textrm{ from Lemma \ref{LemmaL2tol2}, substituting $\mathcal{L} = 1$}\\
 &\leq& \left(\frac{Vol\left(B_{B_{b}+\frac{\epsilon}{32X_{b}}}\cap H_{\|\abudget\|_{2}^{-1}+\frac{\epsilon}{32X_{b}}}\right)}{Vol(B_{B_{b} + \frac{\epsilon}{32X_{b}} })}\right)\left(\frac{32B_{b}X_{b}}{\epsilon} +1\right)^{d} \textrm{ from Theorem \ref{TheoremVolCovering}}\\
 &=& \alpha(d,\abudget(\Cbudget)) \left(\frac{32B_{b}X_{b}}{\epsilon} +1\right)^{d} \textrm{ from Lemma \ref{LemmaSphericalCap}}.
\end{eqnarray*}
The above step uses the relation between spherical cap and its complement along with Lemma \ref{LemmaSphericalCap}, $
Vol\left(B(0,B_{b})\cap H_{\|\abudget\|_2^{-1}}'\right)=Vol(B(0,B_{b}))-Vol\left(B(0,B_{b})\cap H_{\|\abudget\|_2^{-1}}\right)$.

Using the derived inequality within Theorem \ref{TheoremPollard} completes the proof. \qed
\end{proof}

\textcolor{black}{
\section{Future work}\label{sec:future}
We provide several avenues for future work.
\begin{itemize}
\item \textit{Other graph applications:} The MLOC framework is a general tool that can help decision makers translate uncertainty in prediction to uncertainty in operational costs. The ML\&TRP itself is a specific application of the MLOC framework that can be applied to the power grid (as we did), but also to delivery truck routing and other physical routing problems, and can be used for more abstract routing problems such as network routing problems, where distances on the graph do not necessarily correspond to a physical distance. In the future it would be interesting to explore some of these applications.
\item \textit{Relaxing the cost constraints in the MLOC:} Our generalization bound for the ML\&TRP applied to a hypothesis space that was an intersection of an $l_{2}$ ball with a halfspace. It would be interesting to consider more general operational cost constraints, such as quadratic constraints and other convex functions. As it turns out, there are many applications where such constraints naturally arise. In current work, we are constructing bounds for these types of constraints, which lead to exotic hypothesis spaces, such as an intersection of an $l_{2}$ ball with an ellipsoid (for quadratic constraints) or a general convex body (for convex constraints).
\item \textit{Sequential MLOC:} Currently the MLOC framework applies to one-shot decision problems. It would be interesting to extend it to sequential decision problems, perhaps where multiple decisions are made in a sequence of decision epochs, and training data arrive incrementally. In this case, the baseline technique analogous to the ``sequential process" would be a Markov decision process (MDP). The MLOC framework would then assist in understanding the reasonable range of costs for various sequential decision policies.
\end{itemize}
}

\section{Conclusion}\label{sec:conclusion}
In this work, \textcolor{black}{we evaluated the MLOC framework in the context of a real application and demonstrated improvements over current standards. In particular,} we presented an application 
 in the domain of transportation routing called the ML\&TRP. 
 \textcolor{black}{Our framework }
 takes advantage of uncertainty in statistical modeling to explore the decision space and find potentially more practical solutions. 
\textcolor{black}{We provide experiments quantifying the improvements and the scalability of the framework with respect to routing problem size.}
We provided a generalization bound for the ML\&TRP (and for the general MLOC framework) indicating that a prior belief in the operational cost can potentially be beneficial to prediction ability in general. 

\bibliography{mloc_grid_bib}
\end{document}